\documentclass[11pt]{amsart}

\textheight = 23cm
\textwidth = 16cm
\topmargin = 0cm
\oddsidemargin = 0cm
\evensidemargin = 0cm

\usepackage{amsmath}
\usepackage{amssymb}
\usepackage{amsthm}
\usepackage{wrapfig}
\usepackage[dvipdfm]{graphicx}

\newtheorem{definition}{\bf Definition}[section]
\newtheorem{theorem}[definition]{\bf Theorem}
\newtheorem{lemma}[definition]{\bf Lemma}
\newtheorem{proposition}[definition]{\bf Proposition}

\numberwithin{equation}{section}

\title[The Mosco convergence of Dirichlet forms on thin domains]
{The Mosco convergence of Dirichlet forms approximating the Laplace operators with the delta potential on thin domains}
\author{Hirotoshi Kuroda}
\address{Mathematical Institute, Graduate School of Science, Tohoku University, 
6-3, Aoba, Aramaki, Aoba-ku, Sendai 980-8578, Japan}
\email{h-kuroda@math.tohoku.ac.jp}
\keywords{thin domain; Dirichlet form; delta potential; Mosco convergence; Gromov-Hausdorff topology}
\subjclass[2010]{Primary~31C25, Secondary~35J10, 81Q10}

\begin{document}
\footnote[0]{This research was supported by JST, CREST:
A Mathematical Challenge to a New Phase of Material Science, Based on Discrete Geometric Analysis.}
\begin{abstract}
We consider the convergent problems of Dirichlet forms associated with the Laplace operators on thin domains.
This problem appears in the field of quantum waveguides.
We study that a sequence of Dirichlet forms approximating the Laplace operators with the delta potential on thin domains Mosco converges
to the form associated with the Laplace operator with the delta potential on the graph in the sense of Gromov-Hausdorff topology.
From this results we can make use of many results established by Kuwae and Shioya about the convergence of the semigroups
and resolvents generated by the infinitesimal generators associated with the Dirichlet forms.
\end{abstract}
\maketitle

\section{Introduction}

In this paper we consider the Mosco convergence of energy functionals associated with Laplace operators in thin domains
with respect to the Gromov-Hausdorff topology.
Roughly speaking, 
we define thin domains $\Omega_{\varepsilon}$, which sometimes called fat graphs, a simple graph $G = (V,E)$ 
and take some suitable weighted measure $d\mu_{\varepsilon}$ on $\Omega_{\varepsilon}$. 
Here we call the simple graph if the graph has only one junction point (see Figure 1).
Moreover we define the energy functional $\varphi_{\varepsilon}$ associated with the Neumann Laplacian 
on the thin domain $\Omega_{\varepsilon}$.
These notations are defined in section 3.1 and 3.2.
Our main topic is to prove that the energy functional sequence $\{ \varphi_{\varepsilon} \}$ Mosco converges to the functional $\varphi$
associated with Kirchhoff type Laplacian on the graph in the sense of Gromov-Hausdorff topology.
These definitions of functionals $\varphi_{\varepsilon}$ and $\varphi$ are given in section 3.3.

Many mathematicians studied quantum waveguides under various boundary conditions.
First, Hale and Raugel \cite{Hale-Raugel} considered the squeezing problems in thin domains.
Recently, Bouchitte et al. \cite{BMT} showed the Mosco convergence of 
the energy functional sequence associated with Dirichlet Laplacian on thin tubular domains 
in the case which the graph is only one finite space curve.
Kosugi \cite{Kosugi} considered the convergence of solutions to elliptic equations with Neumann boundary conditions on thin domains. 
Kuwae and Shioya \cite{Kuwae-Shioya} proved that the Mosco convergence of the quadratic forms implies
the convergence of the semigroups and the resolvents of self-adjoint operators associated with these forms.
So our results are more convenient to apply to the problem in the quantum physics.
We refer the papers \cite{Dal, Mosco} for more details about $\Gamma$-convergence, Mosco convergence and their applications.
Also many researchers considered the Dirichlet boundary problems in thin domains\cite{ACD, Exner-Post, Molchanov-Vainberg}.
In this Dirichlet problems more complicated structures appear in the gluing conditions at the vertices on the limiting graph.

We explain the table of contents in this paper.
In section 2 we recall ideas about the Gromov-Hausdorff topology established by Kuwae and Shioya \cite{Kuwae-Shioya}.
In section 3 we prove that a sequence of thin domains converges to a graph with respect to the Gromov-Hausdorff topology
and define energy functionals on the space of all square integrable functions on thin domains.
In section 4 we prove that our energy functional sequence satisfies asymptotic compactness condition.
To obtain the continuity of the limit function on the graph is most difficult part in this story.
In section 5 we consider our main theorem that 
Dirichlet forms, which associated with the Neumann Laplace operators added the potential function approximating the delta function,
on thin domains Mosco converges to Dirichlet form, which associated with the delta type Laplace operator.
In this paper we treat a simple graph to avoid the complicated notations which may disturb to understand essential ideas.
For that reason, we remark that our method works about more general network shaped graph in last section 6.

\section{Preliminaries}

We recall some definitions and propositions about Gromov-Hausdorff topology introduced by Kuwae and Shioya \cite{Kuwae-Shioya}.
We refer to that paper for more details.

First, denote by $\mathcal{M}_c$ the set of isomorphism classes of triples $(X,p,m)$, 
where $X$ is a locally compact separable metric space
such that any bounded subset of $X$ is relatively compact, $p \in X$ and $m$ is a positive Radon measure on $X$.
Let $\mathcal{A}$ and $\mathcal{B}$ be any directed sets.

\begin{definition}[cf. {\cite[Remark 2.2]{Kuwae-Shioya}}]\label{d:mcGH}$($measured Gromov-Hausdorff convergence$)$
	We say that a net $\{ (X_{\alpha},p_{\alpha},m_{\alpha}) \}_{\alpha \in \mathcal{A}}$ of spaces in $\mathcal{M}_c$ converges to 
	a space $(X,p,m) \in \mathcal{M}_c$ in the sense of pointed, measured, and compact Gromov-Hausdorff convergence 
	if and only if there exist nets of positive numbers 
	$\{ r_{\alpha} \}_{\alpha \in \mathcal{A}}, \, \{ r_{\alpha}^{\prime} \}_{\alpha \in \mathcal{A}}$ and
	$\{ \varepsilon_{\alpha} \}_{\alpha \in \mathcal{A}}$ such that $r_{\alpha}, r_{\alpha}^{\prime} \nearrow \infty$,
	$\varepsilon_{\alpha} \searrow 0$, and
	$m_{\alpha}$-measurable maps $f_{\alpha} : B(p_{\alpha}, r_{\alpha}) \rightarrow B(p,r_{\alpha}^{\prime})$ 
	called $\varepsilon_{\alpha}$-approximations for $\alpha \in \mathcal{A}$, such that
	\[ | d(f_{\alpha}(x),f_{\alpha}(y)) - d_{\alpha}(x,y)| < \varepsilon_{\alpha} \qquad 
		\mathrm{for \ any} \ x,y \in B(p_{\alpha},r_{\alpha}), \, \alpha \in \mathcal{A}, \]
	\[ B(p,r_{\alpha}^{\prime}) \subset B(f_{\alpha}(B(p_{\alpha},r_{\alpha})),\varepsilon_{\alpha}) \qquad 
		\mathrm{for} \ \alpha \in \mathcal{A}, \]
	\[ \lim_{\alpha} \, \int_{B(p_{\alpha},r_{\alpha})} u \circ f_{\alpha} \, dm_{\alpha} = \int_{X} u \, dm \qquad
		\mathrm{for \ any} \ u \in C_0(X), \]
	where $d_{\alpha}, \, d$ denote the distance functions on $X_{\alpha}, \, X$,
	$B(A,r)$ the open metric $r$-ball of a set $A$ in a metric space,
	and $C_0(X)$ the set of real valued continuous functions on $X$ with compact support in $X$.
\end{definition}

\medskip

\begin{definition}[cf. {\cite[Definition 2.3]{Kuwae-Shioya}}]\label{d:st-c}$(L^2$-strong convergence$)$
	Let $\{ (X_{\alpha},p_{\alpha},m_{\alpha}) \}_{\alpha \in \mathcal{A}}$ be a \\ net of spaces in $\mathcal{M}_c$ 
	and $(X,p,m) \in \mathcal{M}_c$ be a space.
	A net $\{ u_{\alpha} \}_{\alpha \in \mathcal{A}}$ with $u_{\alpha} \in L^2(X_{\alpha},m_{\alpha})$ is said to 
	strongly $L^2$-converges to a function $u \in L^2(X,m)$ if $\{ (X_{\alpha},p_{\alpha},m_{\alpha}) \}_{\alpha \in \mathcal{A}}$
	converges to $(X,p,m)$ with respect to the pointed, measured, and compact Gromov-Hausdorff topology and 
	if there exists a net $\{ \tilde{u}_{\beta} \}_{\beta \in \mathcal{B}}$ of functions in $C_0(\mathrm{supp} \, m)$ 
	tending to $u$ in $L^2(X,m)$ such that
	\[ \lim_{\beta} \, \limsup_{\alpha} \, \| \Phi_{\alpha} \tilde{u}_{\beta} - u_{\alpha} \|_{L^2(X_{\alpha},m_{\alpha})} = 0, \]
	where $f_{\alpha} : B(p_{\alpha}, r_{\alpha}) \rightarrow B(p, r_{\alpha}^{\prime})$ are $\varepsilon_{\alpha}$-approximations, and
	for $v \in C_0(\mathrm{supp} \, m)$,
	\[ \Phi_{\alpha} v := \left\{ \begin{array}{cl}
		v \circ f_{\alpha} & \quad \mathrm{on} \ \ B(p_{\alpha}, r_{\alpha}), \\[0.2cm]
		0 & \quad \mathrm{on} \ \ X_{\alpha} \setminus B(p_{\alpha}, r_{\alpha}).
		\end{array} \right. \]
\end{definition}

\medskip

We define the Hilbert spaces $H_{\alpha} := L^2(X_{\alpha},m_{\alpha})$ and $H := L^2(X,m)$ and
assume that $\{ (X_{\alpha},p_{\alpha},m_{\alpha}) \}_{\alpha \in \mathcal{A}}$ converges to $(X,p,m)$ 
in the sense of Definition \ref{d:mcGH}. 
We remark that we can define the following concepts for general Hilbert spaces,
but in this paper we need only $L^2$ spaces.
More general definitions are treated in \cite{Kuwae-Shioya}. 

\begin{definition}[cf. {\cite[Definition 2.5]{Kuwae-Shioya}}]\label{d:we-c}$(L^2$-weak convergence$)$
	We say that a net $\{ u_{\alpha} \}_{\alpha \in \mathcal{A}}$ with $u_{\alpha} \in H_{\alpha}$ weakly converges to
	a function $u \in H$ if 
	\[ \lim_{\alpha} \, \langle u_{\alpha}, v_{\alpha} \rangle_{H_{\alpha}} = \langle u,v \rangle_{H} \]
	for any net $\{ v_{\alpha} \}_{\alpha \in \mathcal{A}}$ with $v_{\alpha} \in H_{\alpha}$ tending strongly to $v \in H$.
\end{definition}

\bigskip

Next, we recall properties about quadratic forms on Hilbert spaces.
\begin{definition}[cf. {\cite[Definition 2.8]{Kuwae-Shioya}}]\label{d:g-c}$(\Gamma$-convergence$)$ \ 
	We say that a net 
	$\{ F_{\alpha} : H_{\alpha} \rightarrow \overline{\mathbb{R}} := \mathbb{R} \cup \{ \pm \infty \} \}_{\alpha \in \mathcal{A}}$
	of functions $\Gamma$-converges to a function $F : H \rightarrow \overline{\mathbb{R}}$ 
	if and only if $(\mathrm{F}1)$ and $(\mathrm{F}2)$ hold:
	\begin{enumerate}
	\item[$(\mathrm{F}1)$]
			If a net $\{ u_{\alpha} \}_{\alpha \in \mathcal{A}}$ with $u_{\alpha} \in H_{\alpha}$ strongly converges to $u \in H$, then
				\[ F(u) \leq \liminf_{\alpha} \, F_{\alpha}(u_{\alpha}). \]
	\item[$(\mathrm{F}2)$]
			For any $u \in H$ there exists a net $\{ u_{\alpha} \}_{\alpha \in \mathcal{A}}$ with $u_{\alpha} \in H_{\alpha}$ 
			which strongly converges to $u$ and
					\[ F(u) = \lim_{\alpha} \, F_{\alpha}(u_{\alpha}). \]
	\end{enumerate}
\end{definition}

\begin{definition}[cf. {\cite[Definition 2.11]{Kuwae-Shioya}}]\label{d:M-c}$($Mosco convergence$)$ \ 
	We say that a net $\{ \mathcal{E}_{\alpha} \}_{\alpha \in \mathcal{A}}$ of closed quadratic forms 
	with $\mathcal{E}_{\alpha}$ on $H_{\alpha}$ Mosco converges	to a closed quadratic form $\mathcal{E}$ on $H$ 
	if and only if both $(\mathrm{F}2)$ and the following $(\mathrm{F}1^{\prime})$ hold:
	\begin{enumerate}
	\item[$(\mathrm{F}1^{\prime})$]
			If a net $\{ u_{\alpha} \}_{\alpha \in \mathcal{A}}$ with $u_{\alpha} \in H_{\alpha}$ weakly converges to $u \in H$, then
				\[ \mathcal{E}(u) \leq \liminf_{\alpha} \, \mathcal{E}_{\alpha}(u_{\alpha}). \]
	\end{enumerate}
\end{definition}

\begin{definition}[cf. {\cite[Definition 2.12]{Kuwae-Shioya}}]$($Asymptotic compactness$)$ \
	The net $\{ \mathcal{E}_{\alpha} \}_{\alpha \in \mathcal{A}}$ is said to be asymptotically compact
	if for any net $\{ u_{\alpha} \}_{\alpha \in \mathcal{A}}$ such that
	$u_{\alpha} \in H_{\alpha}$ and $\limsup_{\alpha} \, ( \mathcal{E}_{\alpha}(u_{\alpha}) + \| u_{\alpha} \|_{H_{\alpha}}^2) < +\infty$,
	there exists a strongly convergent subnet of $\{ u_{\alpha} \}_{\alpha \in \mathcal{A}}$.
\end{definition}

\medskip

\begin{lemma}[cf. {\cite[Lemma 2.15]{Kuwae-Shioya}}]\label{l:g-M}
	Assume that $\{ \mathcal{E}_{\alpha} \}_{\alpha \in \mathcal{A}}$ is asymptotically compact.
	Then, $\{ \mathcal{E}_{\alpha} \}_{\alpha \in \mathcal{A}}$ $\Gamma$-converges to $\mathcal{E}$ 
	if and only if $\{ \mathcal{E}_{\alpha} \}_{\alpha \in \mathcal{A}}$ Mosco converges to $\mathcal{E}$.
\end{lemma}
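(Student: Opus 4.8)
The plan is to notice that both $\Gamma$-convergence and Mosco convergence incorporate the same recovery-sequence condition $(\mathrm{F}2)$, so the lemma reduces to proving that, under the standing asymptotic compactness assumption, condition $(\mathrm{F}1)$ is equivalent to $(\mathrm{F}1')$. The implication $(\mathrm{F}1') \Rightarrow (\mathrm{F}1)$, i.e.\ Mosco convergence $\Rightarrow$ $\Gamma$-convergence, is the easy half and uses only that $L^2$-strong convergence implies $L^2$-weak convergence: given a net $\{ u_{\alpha} \}$ with $u_{\alpha} \in H_{\alpha}$ converging strongly to $u$, it also converges weakly to $u$, and then $(\mathrm{F}1')$ yields $\mathcal{E}(u) \leq \liminf_{\alpha} \mathcal{E}_{\alpha}(u_{\alpha})$, which is precisely $(\mathrm{F}1)$. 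Asymptotic compactness plays no role in this direction.

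The substantial direction is $(\mathrm{F}1) \Rightarrow (\mathrm{F}1')$. First I would fix a net $\{ u_{\alpha} \}$ with $u_{\alpha} \in H_{\alpha}$ converging weakly to $u \in H$ and set $L := \liminf_{\alpha} \mathcal{E}_{\alpha}(u_{\alpha})$; if $L = +\infty$ there is nothing to prove, so assume $L < +\infty$. Using the characterization of $\liminf$ for nets in $\overline{\mathbb{R}}$, I would pass to a subnet $\{ u_{\beta} \}$ along which $\mathcal{E}_{\beta}(u_{\beta}) \to L$. Recalling the fact, intrinsic to the varying-Hilbert-space framework, that a weakly convergent net is bounded in $L^2$-norm, we obtain $\limsup_{\beta} \| u_{\beta} \|_{H_{\beta}}^2 < +\infty$, and combining this with $\mathcal{E}_{\beta}(u_{\beta}) \to L < +\infty$ gives $\limsup_{\beta} ( \mathcal{E}_{\beta}(u_{\beta}) + \| u_{\beta} \|_{H_{\beta}}^2 ) < +\infty$, which is exactly the hypothesis needed to apply asymptotic compactness.

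Next, asymptotic compactness produces a further subnet, still denoted $\{ u_{\beta} \}$, converging strongly to some $w \in H$. Since strong convergence implies weak convergence, $\{ u_{\beta} \}$ converges weakly to $w$; but being a subnet of $\{ u_{\alpha} \}$ it also converges weakly to $u$, so uniqueness of weak limits forces $w = u$, and hence $\{ u_{\beta} \}$ converges strongly to $u$. Applying $(\mathrm{F}1)$ to this strongly convergent subnet yields
\[ \mathcal{E}(u) \leq \liminf_{\beta} \mathcal{E}_{\beta}(u_{\beta}) = L = \liminf_{\alpha} \mathcal{E}_{\alpha}(u_{\alpha}), \]
which establishes $(\mathrm{F}1')$ and completes the proof.

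The main obstacle is the net-theoretic bookkeeping in the hard direction rather than any deep analytic estimate: one must justify that the $\liminf$ along a net is realized along a subnet, that weakly convergent nets are norm-bounded so that asymptotic compactness is actually applicable, and that the strong limit of the extracted subnet coincides with the prescribed weak limit $u$. Once these points are secured, $(\mathrm{F}1)$ simply transfers the lower-semicontinuity inequality from the strongly convergent subnet to the original weakly convergent net.
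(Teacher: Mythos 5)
Your argument is correct, but note that the paper itself offers no proof of this lemma: it is quoted verbatim from Kuwae--Shioya (their Lemma 2.15), and your proof is essentially the standard one given there --- reduce to $(\mathrm{F}1)\Leftrightarrow(\mathrm{F}1^{\prime})$ since $(\mathrm{F}2)$ is shared, get the easy direction from ``strong implies weak,'' and in the hard direction extract a subnet realizing the $\liminf$, invoke asymptotic compactness, identify the strong limit with the weak limit $u$, and apply $(\mathrm{F}1)$. The one ingredient worth flagging explicitly is the norm-boundedness of a weakly convergent net: for nets in this variable-Hilbert-space framework this is \emph{not} a consequence of the uniform boundedness principle (which fails for nets) but is a separate lemma of Kuwae--Shioya, proved from their definition of weak convergence via testing against all strongly convergent nets; you correctly identify it as an input rather than deriving it, which is acceptable since the surrounding section is itself a compilation of results from that reference.
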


\medskip

\begin{definition}[cf. {\cite[Definition 2.13]{Kuwae-Shioya}}]$($Compactly convergence$)$ \
	We say that $\mathcal{E}_{\alpha} \to \mathcal{E}$ compactly 
	if $\{ \mathcal{E}_{\alpha} \}_{\alpha \in \mathcal{A}}$ Mosco converges to $\mathcal{E}$
	and if $\{ \mathcal{E}_{\alpha} \}_{\alpha \in \mathcal{A}}$ is asymptotically compact.
\end{definition}

\bigskip

Lastly, we recall the relations between the convergence of densely defined closed quadratic forms $\mathcal{E}_{\alpha}$ 
and a behavior infinitesimal generators $A_{\alpha}$ associated with $\mathcal{E}_{\alpha}$.
For a densely defined closed quadratic form $\mathcal{E}$,
$A$, $\{ T_t \}_{t \geq 0}$ and $\{ R_{\zeta} \}_{\zeta \in \rho(A)}$ denote
the infinitesimal generator associated with $\mathcal{E}$, the instead of the strongly continuous contraction semigroup, and
the strongly continuous resolvent, that is $R_{\zeta} = (A - \zeta)^{-1}$, where $\rho(A)$ denotes the resolvent set of $A$.

\begin{theorem}[cf. {\cite[Theorem 2.4]{Kuwae-Shioya}}]\label{t:K-S}
	The following are all equivalent:
	\begin{enumerate}
	\item $\mathcal{E}_{\alpha} \to \mathcal{E}$ with respect to the Mosco topology 
			(resp. $\mathcal{E}_{\alpha} \to \mathcal{E}$ compactly).
	\item $R_{\zeta}^{\alpha} \to R_{\zeta}$ strongly (resp. compactly) for some $\zeta < 0$.
	\item $T_t^{\alpha} \to T_t$ strongly (resp. compactly) for some $t > 0$.
	\end{enumerate}
\end{theorem}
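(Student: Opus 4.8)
The plan is to establish the cycle of equivalences by treating the form--resolvent correspondence $(1)\Leftrightarrow(2)$ as the core and deducing the resolvent--semigroup equivalence $(2)\Leftrightarrow(3)$ from a Trotter--Kato type argument. Throughout I normalize so that $A\ge 0$ is the nonnegative self-adjoint operator with $\mathcal{E}(u)=\langle Au,u\rangle$, write $\zeta=-\lambda$ with $\lambda>0$, and use the variational description of the resolvent: $u=R_{-\lambda}f=(A+\lambda)^{-1}f$ is the unique minimizer over $H$ of the strictly convex, coercive, lower semicontinuous functional $G_\lambda(v):=\mathcal{E}(v)+\lambda\|v\|_H^2-2\langle f,v\rangle_H$, characterized by $\mathcal{E}(u,w)+\lambda\langle u,w\rangle_H=\langle f,w\rangle_H$ for all $w$. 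All the analysis reduces to comparing minimizers of $G_\lambda^\alpha$ on $H_\alpha$ with those of $G_\lambda$ on $H$, using the $L^2$-weak compactness of bounded nets and the fact that $L^2$-weak convergence together with convergence of norms upgrades to $L^2$-strong convergence, both of which hold in the Kuwae--Shioya framework.

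For $(1)\Rightarrow(2)$ I take $f_\alpha\to f$ strongly and set $u_\alpha:=R^\alpha_{-\lambda}f_\alpha$, $u:=R_{-\lambda}f$. Choosing a recovery net $w_\alpha\to u$ with $\mathcal{E}_\alpha(w_\alpha)\to\mathcal{E}(u)$ from $(\mathrm{F}2)$ and inserting it into the minimality of $u_\alpha$ gives $\limsup_\alpha G_\lambda^\alpha(u_\alpha)\le G_\lambda(u)$, whence $\{u_\alpha\}$ is bounded in energy and in norm. Passing to a weakly convergent subnet $u_\alpha\rightharpoonup u^\ast$ and applying $(\mathrm{F}1')$ together with weak lower semicontinuity of the norm yields $G_\lambda(u^\ast)\le\liminf_\alpha G_\lambda^\alpha(u_\alpha)\le G_\lambda(u)$; since $u$ is the unique minimizer, $u^\ast=u$ and every inequality is an equality. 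In particular $\mathcal{E}_\alpha(u_\alpha)\to\mathcal{E}(u)$ and $\|u_\alpha\|\to\|u\|$, so $u_\alpha\to u$ strongly; as the limit is independent of the subnet, the whole net converges and $R^\alpha_{-\lambda}\to R_{-\lambda}$ strongly.

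For $(2)\Rightarrow(1)$ I verify $(\mathrm{F}2)$ and $(\mathrm{F}1')$ separately. Given $u\in\mathcal{D}(A)$, put $f:=(A+\lambda)u$, choose $f_\alpha\to f$ strongly, and set $u_\alpha:=R^\alpha_{-\lambda}f_\alpha$; then $(2)$ gives $u_\alpha\to u$ strongly, and testing the Euler--Lagrange identity against $u_\alpha$ yields $\mathcal{E}_\alpha(u_\alpha)+\lambda\|u_\alpha\|^2=\langle f_\alpha,u_\alpha\rangle\to\langle f,u\rangle=\mathcal{E}(u)+\lambda\|u\|^2$, so $\mathcal{E}_\alpha(u_\alpha)\to\mathcal{E}(u)$; a diagonal argument over $u^{(n)}\in\mathcal{D}(A)$ approximating a general $u$ in the form norm extends this to all of $\mathcal{D}(\mathcal{E})$, giving $(\mathrm{F}2)$. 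For $(\mathrm{F}1')$ I use the bounded approximating forms $\mathcal{E}^{(\lambda)}(u)=\langle B_\lambda u,u\rangle$ with $B_\lambda:=\lambda A(A+\lambda)^{-1}=\lambda(I-\lambda R_{-\lambda})$, which satisfy $0\le B_\lambda\le\lambda$, $\mathcal{E}^{(\lambda)}\nearrow\mathcal{E}$ as $\lambda\to\infty$, and $\mathcal{E}^{(\lambda)}_\alpha\le\mathcal{E}_\alpha$. Since $R^\alpha_{-\lambda}\to R_{-\lambda}$ strongly we get $B^\alpha_\lambda\to B_\lambda$ and hence $(B^\alpha_\lambda)^{1/2}\to(B_\lambda)^{1/2}$ strongly by functional calculus on the common spectral bound; then for $u_\alpha\rightharpoonup u$ the vectors $(B^\alpha_\lambda)^{1/2}u_\alpha$ converge weakly to $(B_\lambda)^{1/2}u$, and weak lower semicontinuity of the norm gives $\mathcal{E}^{(\lambda)}(u)=\|(B_\lambda)^{1/2}u\|^2\le\liminf_\alpha\|(B^\alpha_\lambda)^{1/2}u_\alpha\|^2=\liminf_\alpha\mathcal{E}^{(\lambda)}_\alpha(u_\alpha)\le\liminf_\alpha\mathcal{E}_\alpha(u_\alpha)$. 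Taking the supremum over $\lambda$ yields $\mathcal{E}(u)\le\liminf_\alpha\mathcal{E}_\alpha(u_\alpha)$, which is $(\mathrm{F}1')$.

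Finally, $(2)\Leftrightarrow(3)$ follows from the spectral representations $R_{-\lambda}=\int(s+\lambda)^{-1}\,dE(s)$ and $T_t=\int e^{-ts}\,dE(s)$: uniform approximation of $e^{-ts}$ by linear combinations of resolvent kernels (a Post--Widder / Trotter--Kato approximation) transfers strong convergence from one family of bounded functions of $A_\alpha$ to the other across the varying Hilbert spaces, and the parenthetical ``compact'' statements are obtained by carrying compact convergence, i.e.\ asymptotic compactness, through the same estimates. I expect the genuinely delicate point to be $(\mathrm{F}1')$ in $(2)\Rightarrow(1)$: because the quadratic pairing $v\mapsto\langle B_\lambda v,v\rangle$ is only weakly lower semicontinuous and not weakly continuous, one cannot pass to the limit directly, and the argument hinges on the strong-operator continuity of the square root together with the $L^2$-weak/strong compactness machinery of the Gromov--Hausdorff framework, which must be invoked carefully since the Hilbert spaces $H_\alpha$ vary with $\alpha$.
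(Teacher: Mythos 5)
The paper gives no proof of this statement: it is quoted verbatim from Kuwae--Shioya \cite[Theorem 2.4]{Kuwae-Shioya} as an imported tool, so there is no internal argument to compare yours against. Measured against the source, your outline is essentially the standard Mosco/Kuwae--Shioya proof and is correct: the variational characterization of $R_{-\lambda}f$ as the minimizer of $G_\lambda$ for $(1)\Rightarrow(2)$, the Euler--Lagrange identity plus a core argument for $(\mathrm{F}2)$, the Yosida-type bounded forms $\mathcal{E}^{(\lambda)}(u)=\langle B_\lambda u,u\rangle$ with the square-root trick for $(\mathrm{F}1^{\prime})$, and a Trotter--Kato/Laplace-transform argument for $(2)\Leftrightarrow(3)$ are exactly the ingredients used there, and you correctly identify the square-root step as the delicate one, since $v\mapsto\langle B_\lambda v,v\rangle$ is not weakly continuous.

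Two points are used implicitly and should be made explicit. First, the theorem assumes resolvent convergence only for \emph{some} $\zeta<0$, but both the supremum over $\lambda$ in the $(\mathrm{F}1^{\prime})$ step and the product formula $T_t=\lim_n\bigl(\tfrac{n}{t}R_{-n/t}\bigr)^n$ need $R_{-\lambda}^{\alpha}\to R_{-\lambda}$ for all $\lambda>0$; this is obtained from the single $\zeta$ via the first resolvent identity, a Neumann series valid on $|\lambda-\mu|<\lambda$ (using $\|R_{-\lambda}\|\leq\lambda^{-1}$), and continuation. Second, every Hilbert-space fact you invoke (weak compactness of bounded nets, weak lower semicontinuity of the norm, strong convergence of products and of polynomials of uniformly bounded strongly convergent operator nets, and the upgrade from weak convergence plus norm convergence to strong convergence) must be re-established in the varying-space framework of Definitions \ref{d:st-c} and \ref{d:we-c}; all of these are indeed supplied in \cite{Kuwae-Shioya}, but they are not free, and your proof stands or falls with them.
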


\section{Gromov-Hausdorff convergence of thin domains}
\subsection{Notations about graphs and thin domains} \ \\[-0.2cm]

We write a point $x \in \mathbb{R}^n$ like $x = {^t\!(}x_1, x^{\prime}) \in \mathbb{R} \times \mathbb{R}^{n-1} = \mathbb{R}^n$ and
denote by $O$ the origin.

Let $G = ( V, E )$ be a graph where $V$ is the set of all vertices and $E$ is the set of all edges as follows:  
\begin{equation*}\label{d:G}
	V = \{ O, v_j \mid j = 1, \ldots, N \}, \qquad E = \{ e_j = \overrightarrow{Ov_j} \mid j = 1, \ldots, N \}.
\end{equation*}
$l_j \in (0, +\infty)$ denotes the length of $e_j$ for $j=1,\ldots,N$.
We identify each edge $e_j$ by an interval $(0, l_j) = \{ s \in \mathbb{R} \mid 0 < s < l_j \}$.
Throughout this paper $s$ is the coordinate of the graph. 
Moreover $d\mu$ denotes the 1 dimensional Lebesgue measure on the graph $G$.

Next, we define a thin domain $\Omega_{\varepsilon}$.
For $j = 1, \ldots, N$, we take an orthogonal matrix $R_j \in O(n)$ satisfying
\begin{equation}\label{d:Rj}
	\det R_j = 1, \qquad R_j \mbox{\boldmath{$a$}} = l_j^{-1} \, \overrightarrow{Ov_j},
\end{equation}
where $\mbox{\boldmath{$a$}} = {^t\!(1,0,\cdots,0)} \in \mathbb{R}^n$ is the unit vector.
Firstly we define tubular domains $D_{j,\varepsilon}$ as
\begin{equation*}\label{d:Dje}
	D_{j,\varepsilon} := \{ x = R_j y \in \mathbb{R}^n \mid \varepsilon l \leq y_1 < l_j, \ |y^{\prime}| < \varepsilon \},
\end{equation*}
for $\varepsilon \in I := (0, \varepsilon_0]$ and $j=1,\ldots,N$.
Here we take positive constants $l$ and $\varepsilon_0$ such that 
$D_{i,\varepsilon} \cap D_{j,\varepsilon} = \emptyset$ for $i \neq j$ and $\varepsilon \in I$.
Now, we shall denote by $B_{\varepsilon}$ the normal section of the tubular domain $D_{j,\varepsilon}$, that is
\begin{equation}\label{d:Br}
	B_r := \{ y^{\prime} \in \mathbb{R}^{n-1} \mid |y^{\prime}| < r \}
\end{equation}
for $r > 0$.

Secondly we define a part of boundary $\partial D_{j,\varepsilon}$ as follows:
\begin{align*}
	\Gamma_{j,\varepsilon} &:= \{ x = R_j y \in \mathbb{R}^n \mid y_1 = l_j, \ |y^{\prime}| \leq \varepsilon \}, \\[0.2cm]
	\Gamma_{j,\varepsilon}^{\prime} &:= \{ x = R_j y \in \mathbb{R}^n \mid y_1 = \varepsilon l, \ |y^{\prime}| \leq \varepsilon \}.
\end{align*}
\begin{figure}[htbp]
	\begin{minipage}{0.4\hsize}
		\begin{center}
		\includegraphics[width=47mm]{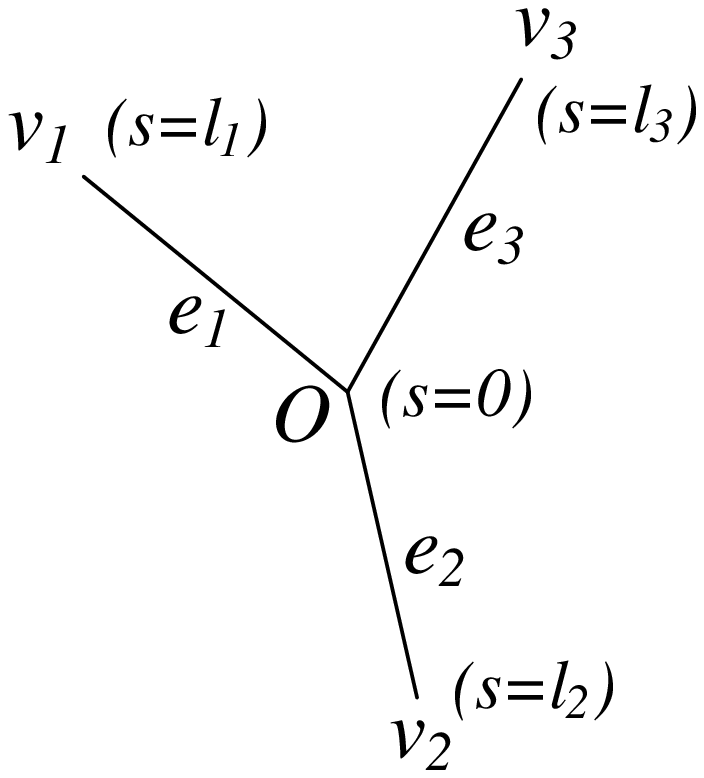}
		\caption{graph $G$}
		\end{center}
	\end{minipage}
	\begin{minipage}{0.58\hsize}
		\begin{center}
		\includegraphics[width=92mm]{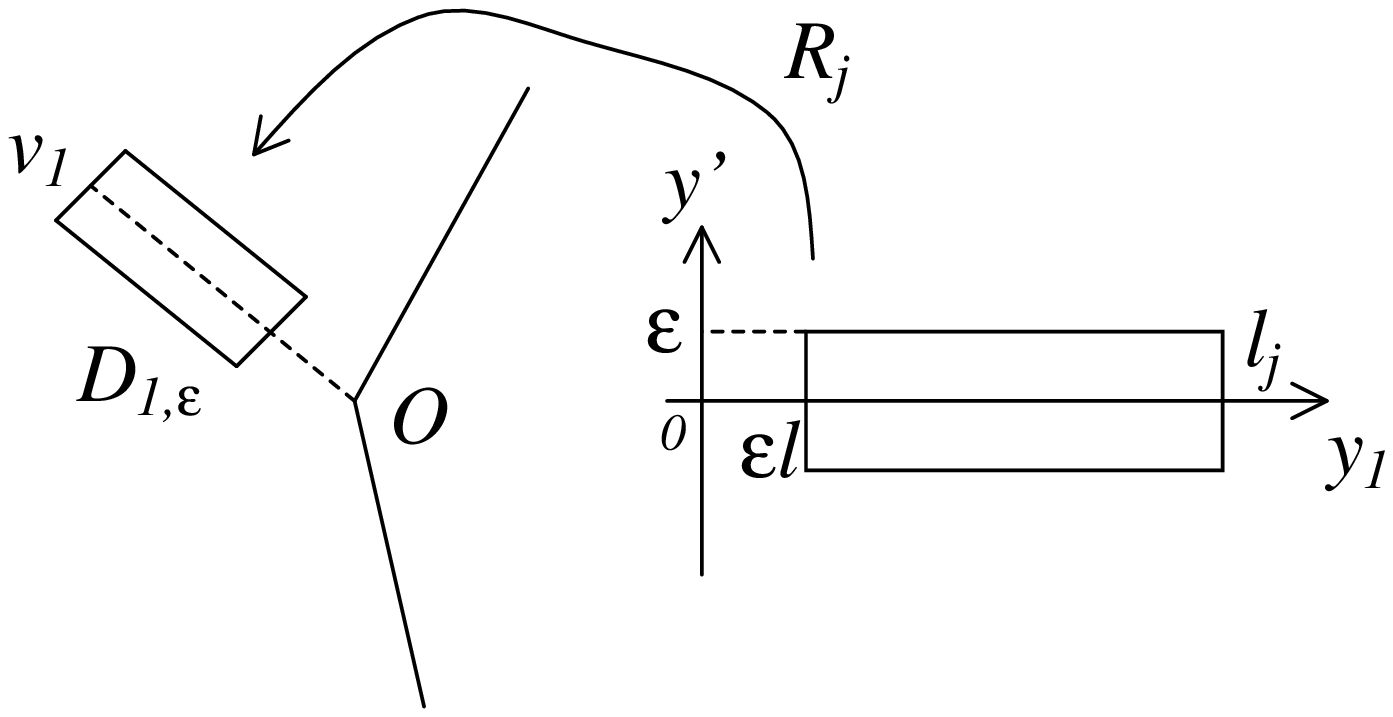}
		\caption{tubular domain $D_{j,\varepsilon}$}
		\end{center}
	\end{minipage}
\end{figure}
We use  the following notations $D_j, \Gamma_j, \ldots$ instead of
$D_{j,\varepsilon_0}, \Gamma_{j,\varepsilon_0}, \ldots$ for simplicity.

Thirdly we fix an open set $J$ in $\mathbb{R}^n$ satisfying the following conditions:
\[ O \in J, \qquad J \cap D_j = \emptyset, \qquad
	\partial J \cap \partial D_j = \Gamma_j^{\prime}
	\qquad \mathrm{for} \ j = 1, \ldots, N \]
and we define a domain $\Omega$ in $\mathbb{R}^n$ and a part of its boundary $\Sigma$ as
\[ \Omega := J \cup \bigcup_{j=1}^{N} D_j, \qquad \Sigma := \partial \Omega \setminus \bigcup_{j=1}^{N} \Gamma_j. \]
Now we suppose that the surface $\Sigma$ is $C^1$.

Lastly we define a family of thin domains for $\varepsilon \in I$ as
\begin{equation}\label{d:Oe}
	J_{\varepsilon} := \{ x = (\varepsilon/\varepsilon_0) z \in \mathbb{R}^n \mid z \in J \}, \qquad
	\Omega_{\varepsilon} := J_{\varepsilon} \cup \bigcup_{j=1}^{N} D_{j, \varepsilon}, \qquad
	\Sigma_{\varepsilon} := \partial \Omega_{\varepsilon} \setminus \bigcup_{j=1}^{N} \Gamma_{j,\varepsilon}.
\end{equation}
We remark that the boundary $\Sigma_{\varepsilon}$ is also $C^1$ and $\displaystyle \bigcap_{\varepsilon \in I} \Omega_{\varepsilon} = G$.
\begin{figure}[htbp]
	\begin{minipage}{0.49\hsize}
		\begin{center}
		\includegraphics[width=75mm]{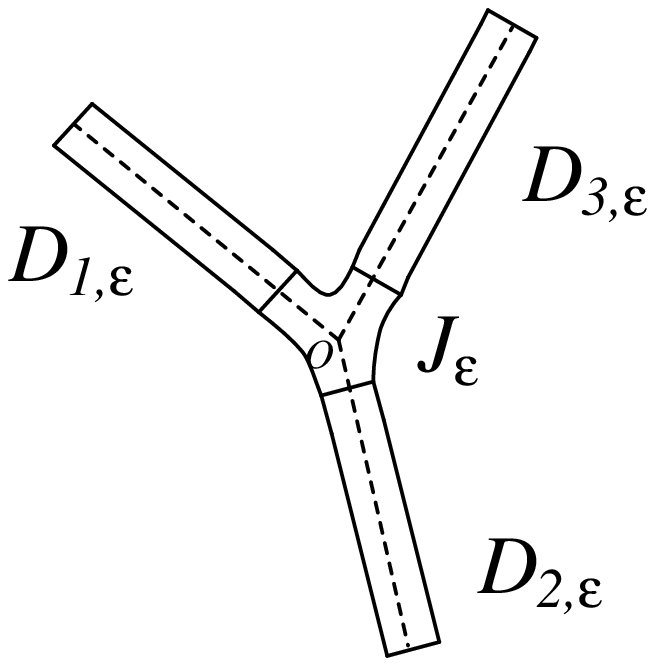}
		\caption{thin domain $\Omega_{\varepsilon}$}
		\end{center}
	\end{minipage}
	\begin{minipage}{0.49\hsize}
		\begin{center}
		\includegraphics[width=75mm]{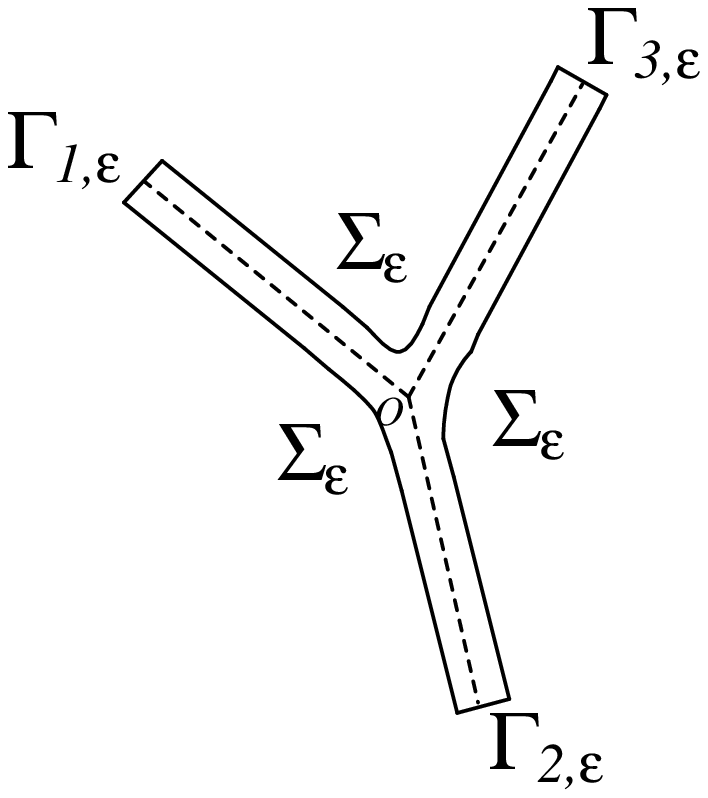}
		\caption{boundary $\partial \Omega_{\varepsilon}$}
		\end{center}
	\end{minipage}
\end{figure}

By the definition of domains we obtain that the volume of each domain is
\begin{equation}\label{e:ordJD}
	|J_{\varepsilon}| = (\varepsilon/\varepsilon_0)^n |J| = O(\varepsilon^n), \qquad
	|D_{j,\varepsilon}| = (l_j - \varepsilon l)\omega \varepsilon^{n-1} = O(\varepsilon^{n-1}),
\end{equation}
where $\omega$ denotes the volume of the $(n-1)$ dimensional unit ball $B_1$ defined by (\ref{d:Br}).

\subsection{Proof of the convergence of thin domains} \ \\[-0.2cm]

We define a Radon measure $d\mu_{\varepsilon}$ on $\Omega_{\varepsilon}$ for $\varepsilon \in I$ as 
\begin{equation}\label{d:dmue}
	d\mu_{\varepsilon} := \frac{1}{\omega \varepsilon^{n-1}} \, dx,
\end{equation}
where $dx$ denotes the Lebesgue measure on $\mathbb{R}^n$.

We define projections $\{ f_{\varepsilon} : \Omega_{\varepsilon} \rightarrow G \}_{\varepsilon \in I}$ as follows.
First, we fix a continuous function $f : \overline{J} \rightarrow G \cap \overline{J}$ such that
\[ \mathrm{Range}(f|_{\Gamma_j^{\prime}}) = G \cap \Gamma_j^{\prime}, \]
where $\overline{J}$ is the closure of $J$.
Next, we define a continuous function $f_{\varepsilon} : \Omega_{\varepsilon} \rightarrow G$ for $\varepsilon \in I$ as
\begin{equation*}
	f_{\varepsilon}(x) := \left\{ \begin{array}{cl}
	\displaystyle \frac{\varepsilon}{\varepsilon_0} \, f \left( \frac{\varepsilon_0}{\varepsilon}x \right) & 
		\mathrm{if} \ \ x \in J_{\varepsilon} = (\varepsilon/\varepsilon_0)J, \\[0.4cm]
		\pi_1(R_j^{-1}x) & \mathrm{if} \ \ x \in D_{j,\varepsilon}
	\end{array} \right.
\end{equation*}
where $\pi_1 : \mathbb{R}^n \rightarrow \mathbb{R}$ is the orthogonal projection with respect to the first component, 
that is $\pi_1(y) = y_1$. 

\begin{proposition}\textbf{(Gromov-Hausdorff convergence of thin domains)}
	The sequence of pointed measured spaces $\{ (\Omega_{\varepsilon}, O, d\mu_{\varepsilon}) \}_{\varepsilon \in I}$ 
	defined by (\ref{d:Oe}) and (\ref{d:dmue}) converges to the pointed measured space $(G, O, d\mu)$ 
	as $\varepsilon \to +0$ in the sense of pointed, measured and compact Gromov-Hausdorff topology.
\end{proposition}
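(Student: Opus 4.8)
The plan is to verify the three defining conditions of Definition \ref{d:mcGH} directly, using the given projections $f_{\varepsilon}$ as the approximation maps, the common base point $p_{\varepsilon} = p = O$, and the choice $r_{\varepsilon} = r_{\varepsilon}' = 1/\varepsilon \nearrow \infty$. Since both $G$ and $\Omega_{\varepsilon}$ have diameters bounded uniformly in $\varepsilon$, these balls coincide with the whole spaces once $\varepsilon$ is small, so the balls in the definition cause no trouble. I equip $\Omega_{\varepsilon}$ with its intrinsic (geodesic) metric $d_{\varepsilon}$ and $G$ with the arc-length metric $d$, and I will produce the approximation parameter in the form $\delta_{\varepsilon} := C\varepsilon$, where $C$ depends only on $l$, $\varepsilon_0$ and the intrinsic diameter of $J$.

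The first and main step is the distance-distortion estimate
\[ |d(f_{\varepsilon}(x),f_{\varepsilon}(\tilde{x})) - d_{\varepsilon}(x,\tilde{x})| \le C\varepsilon \qquad \text{for all } x,\tilde{x} \in \Omega_{\varepsilon}. \]
If $x = R_i y$ and $\tilde{x} = R_i \tilde{y}$ lie in the same tube $D_{i,\varepsilon}$, the straight segment stays in the convex cylinder, so $d_{\varepsilon}(x,\tilde{x}) = |y-\tilde{y}|$ while $d(f_{\varepsilon}x,f_{\varepsilon}\tilde{x}) = |y_1-\tilde{y}_1|$, and these differ by at most $2\varepsilon$ because $|y'|,|\tilde{y}'| < \varepsilon$. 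If $x \in D_{i,\varepsilon}$ and $\tilde{x} \in D_{j,\varepsilon}$ with $i \neq j$, any path in $\Omega_{\varepsilon}$ joining them must leave each tube through its inner face $\Gamma_{i,\varepsilon}'$, respectively $\Gamma_{j,\varepsilon}'$, since the tubes are attached to the remainder of $\Omega_{\varepsilon}$ only along these faces. As $\pi_1 \circ R_i^{-1}$ is $1$-Lipschitz, the part of the path inside $D_{i,\varepsilon}$ has length at least $y_1 - \varepsilon l$, and likewise inside $D_{j,\varepsilon}$, giving the lower bound $d_{\varepsilon}(x,\tilde{x}) \ge y_1 + \tilde{y}_1 - 2\varepsilon l = d(f_{\varepsilon}x,f_{\varepsilon}\tilde{x}) - 2\varepsilon l$. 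For the matching upper bound I concatenate the two axial segments down to the inner faces with a connecting path across $J_{\varepsilon}$; because $J_{\varepsilon} = (\varepsilon/\varepsilon_0)J$ has intrinsic diameter $O(\varepsilon)$, this yields $d_{\varepsilon}(x,\tilde{x}) \le y_1 + \tilde{y}_1 + O(\varepsilon)$. The cases where one point lies in $J_{\varepsilon}$ are handled identically, all distances and graph images then being $O(\varepsilon)$-close to the origin.

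The covering condition follows from the surjectivity of $f_{\varepsilon}$: each tube $D_{j,\varepsilon}$ projects via $\pi_1 \circ R_j^{-1}$ onto the subinterval $[\varepsilon l, l_j)$ of the edge $e_j$, while the rescaled map on $J_{\varepsilon}$ covers the remaining part $[0,\varepsilon l)$ near $O$ by virtue of the condition $\mathrm{Range}(f|_{\Gamma_j'}) = G \cap \Gamma_j'$, so $f_{\varepsilon}(\Omega_{\varepsilon}) = G$ and $B(O,r_{\varepsilon}') \subset B(f_{\varepsilon}(\Omega_{\varepsilon}),\delta_{\varepsilon})$ holds trivially. For the measure convergence I fix $u \in C_0(G)$ and split the integral of $u \circ f_{\varepsilon}$ over $\Omega_{\varepsilon}$ into the junction and the tubes. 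On $J_{\varepsilon}$ one has $\mu_{\varepsilon}(J_{\varepsilon}) = (\omega\varepsilon^{n-1})^{-1}|J_{\varepsilon}| = O(\varepsilon)$ by (\ref{e:ordJD}), so that contribution vanishes. On each $D_{j,\varepsilon}$ the change of variables $x = R_j y$ is volume-preserving since $\det R_j = 1$, and Fubini over the cross-section $B_{\varepsilon}$ gives
\[ \int_{D_{j,\varepsilon}} (u\circ f_{\varepsilon})\, d\mu_{\varepsilon} = \frac{1}{\omega\varepsilon^{n-1}}\int_{\varepsilon l}^{l_j}\!\int_{B_{\varepsilon}} u(y_1)\,dy'\,dy_1 = \int_{\varepsilon l}^{l_j} u(y_1)\,dy_1, \]
where the normalizing factor $(\omega\varepsilon^{n-1})^{-1}$ in (\ref{d:dmue}) exactly cancels $|B_{\varepsilon}| = \omega\varepsilon^{n-1}$. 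Summing over $j$ and letting $\varepsilon \to 0$ recovers $\int_G u\,d\mu$.

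I expect the distance-distortion estimate across distinct tubes to be the main obstacle, since it is the only place where the global geometry of the gluing region $J_{\varepsilon}$ enters: the lower bound rests on the fact that the tubes communicate only through the inner faces $\Gamma_{j,\varepsilon}'$ together with the $1$-Lipschitz property of the axial projections, while the upper bound rests on controlling the intrinsic diameter of $J_{\varepsilon}$ by $O(\varepsilon)$, for which the connectedness of $J$ and the $C^1$ regularity of $\Sigma$ are used. The covering and measure conditions, by contrast, are essentially exact identities forced by the definition of $f_{\varepsilon}$ and by the normalization built into $d\mu_{\varepsilon}$.
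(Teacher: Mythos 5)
Your proposal is correct, and on the one condition the paper actually verifies --- the weak convergence of the pushed-forward measures --- it coincides with the paper's proof: the junction contribution is killed by the volume estimate $|J_{\varepsilon}|/(\omega\varepsilon^{n-1}) = O(\varepsilon)$ from (\ref{e:ordJD}), and on each tube the change of variables $x = R_j y$ together with the normalization in (\ref{d:dmue}) reduces the integral to $\int_{\varepsilon l}^{l_j}\psi_j(y_1)\,dy_1$. The difference is one of completeness rather than of route: the paper's proof checks only the measure condition of Definition \ref{d:mcGH} and leaves the distance-distortion and covering conditions tacit, whereas you verify them explicitly via the convexity of the cylinders, the $1$-Lipschitz axial projections, and the $O(\varepsilon)$ intrinsic diameter of $J_{\varepsilon}$; these estimates are sound and are exactly what a fully rigorous verification requires. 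One small remark: you do not actually need $f_{\varepsilon}(\Omega_{\varepsilon}) = G$ (which the stated hypothesis $\mathrm{Range}(f|_{\Gamma_j^{\prime}}) = G\cap\Gamma_j^{\prime}$ does not quite force); the image of the tubes alone is already $\varepsilon l$-dense in $G$, which suffices for the covering condition.
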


\begin{proof}
	For any test function $\psi \in C_0(G)$, we consider the following limit
	\begin{equation}\label{e:d-GH1}
		\lim_{\varepsilon \to +0} \, \int_{\Omega_{\varepsilon}} \psi \circ f_{\varepsilon} \, d\mu_{\varepsilon}
			= \lim_{\varepsilon \to +0} \left\{ \int_{J_{\varepsilon}} \psi \circ f_{\varepsilon} \, d\mu_{\varepsilon}
				+ \sum_{j=1}^N \frac{1}{\omega {\varepsilon}^{n-1}} \, 
					\int_{D_{j,\varepsilon}} (\psi \circ f_{\varepsilon})(x) \, dx \right\}.
	\end{equation}

	By the definition of $d\mu_{\varepsilon}$, see (\ref{d:dmue}), at the first term of (\ref{e:d-GH1}) we have
	\[ \left| \int_{J_{\varepsilon}} \psi \circ f_{\varepsilon} \, d\mu_{\varepsilon} \right|
		\leq \| \psi \|_{L^{\infty}(G)} \frac{|J_{\varepsilon}|}{\omega {\varepsilon}^{n-1}}, \]
	then 
	\begin{equation}\label{e:d-GH2}
		\lim_{\varepsilon \to +0} \, \int_{J_{\varepsilon}} \psi \circ f_{\varepsilon} \, d\mu_{\varepsilon} = 0
	\end{equation}
	holds because of (\ref{e:ordJD}).

	At the second term of (\ref{e:d-GH1}), $\psi_j = \psi|_{e_j}$ denote the restriction to each edge $e_j$.
	By the transformation of variables $x = R_j y$ we obtain that
	\[ \begin{split}
		\frac{1}{\omega {\varepsilon}^{n-1}} \, \int_{D_{j,\varepsilon}} (\psi \circ f_{\varepsilon})(x) \, dx
			&= \frac{1}{\omega {\varepsilon}^{n-1}} \, \int_{(\varepsilon l,l_j)\times B_{\varepsilon}} \psi_j(y_1) \, dy \\[0.1cm]
			&= \int_{\varepsilon l}^{l_j} \psi_j(y_1) \, dy_1.
	\end{split} \]
	Hence
	\begin{equation}\label{e:d-GH3}
		\lim_{\varepsilon \to +0} \, \frac{1}{\omega {\varepsilon}^{n-1}} \, 
			\int_{D_{j,\varepsilon}} (\psi \circ f_{\varepsilon})(x) \, dx = \int_{0}^{l_j} \psi_j(s) \, ds
	\end{equation}
	holds for $j = 1, \ldots, N$.

	Since (\ref{e:d-GH1}), (\ref{e:d-GH2}) and (\ref{e:d-GH3}), we get the following limit condition:
	\[ \lim_{\varepsilon \to +0} \, \int_{\Omega_{\varepsilon}}  \psi \circ f_{\varepsilon} \, d\mu_{\varepsilon}
		= 0 + \sum_{j=1}^N \int_{0}^{l_j} \psi_j(s) \, ds = \int_{G}  \psi \, d\mu \]
	for any $\psi \in C_0(G)$.
	Therefore the proof is completed.
\end{proof}

\subsection{Definition of energy functionals on thin domains and graphs} \ \\[-0.2cm]

Let $V \in C_0(\mathbb{R}^n)$ be a nonnegative valued function with $\mathrm{supp} \, V \subset \varepsilon_{0}^{-1} J$.
We define a constant $C_V$ and a functional sequence $\{ V_{\varepsilon} \}_{\varepsilon \in I}$ on $\mathbb{R}^n$ as
\begin{equation*}
	C_V := \frac{1}{\omega} \, \int_{\varepsilon_{0}^{-1}J}  V(x) \, dx
\end{equation*}
and
\begin{equation*}
	V_{\varepsilon}(x) := \frac{1}{\varepsilon} V \left( \frac{x}{\varepsilon} \right) \qquad \mathrm{for} \ x \in \mathbb{R}^n.
\end{equation*}
Here the function $V_{\varepsilon}$ has the compact support in $J_{\varepsilon}$.

Next, we define functionals $\varphi_{\varepsilon}^K, \varphi_{\varepsilon}^V, 
\varphi_{\varepsilon} : L^2(\Omega_{\varepsilon}, d\mu_{\varepsilon}) \rightarrow [0,+\infty]$ as
\begin{equation*}\label{d:phie0}
	\varphi_{\varepsilon}^K(u^{\varepsilon}) := \left\{ \begin{array}{cc}
		\displaystyle 
			\int_{\Omega_{\varepsilon}}  |\nabla u^{\varepsilon}|^2 \, d\mu_{\varepsilon} & \quad 
				\mathrm{if} \ \ u^{\varepsilon} \in H^1(\Omega_{\varepsilon},d\mu_{\varepsilon}), \\[0.2cm]
		+\infty & \quad \mathrm{otherwise},
		\end{array} \right.
\end{equation*}
\begin{equation*}
	\varphi_{\varepsilon}^V(u^{\varepsilon}) := \int_{\Omega_{\varepsilon}}  V_{\varepsilon} |u^{\varepsilon}|^2 \, d\mu_{\varepsilon},
\end{equation*}
\begin{equation}\label{d:phie}
	\varphi_{\varepsilon}(u^{\varepsilon}) := \varphi_{\varepsilon}^K(u^{\varepsilon}) + \varphi_{\varepsilon}^V(u^{\varepsilon}) 
\end{equation}
for $\varepsilon \in I$.
Also we define functionals $\varphi^K, \varphi^V, \varphi : L^2(G) \rightarrow [0,+\infty]$ as
\begin{equation*}\label{d:phiN}
	\varphi^K(\psi) := \left\{ \begin{array}{cc}
		\displaystyle \sum_{j=1}^N \int_{0}^{l_j} |\psi_j^{\prime}(s)|^2 \, ds & \quad \mathrm{if} \ \ \psi \in H^1(G), \\[0.2cm]
		+\infty & \quad \mathrm{otherwise},
		\end{array} \right. 
\end{equation*}
\begin{equation*}
	\varphi^V(\psi) := \left\{ \begin{array}{cc}
		C_V |\psi(O)|^2 & \quad \mathrm{if} \ \ \psi \in H^1(G), \\[0.2cm]
		+\infty & \quad \mathrm{otherwise},
		\end{array} \right.
\end{equation*}
\begin{equation}\label{d:phi}
	\varphi(\psi) := \varphi^K(\psi) + \varphi^V(\psi)
\end{equation}
where $\psi_j$ is the restriction to each edge $e_j$.
Here we define functional spaces on $G$ as follows:
\[ \begin{split}
	L^2(G) &:= \{ \psi : G \rightarrow \mathbb{C} \mid \psi_j \in L^2(e_j) \ for \ j=1,\ldots,N \}, \\[0.2cm]
	H^1(G) &:= \{ \psi \in C(G) \mid \psi_j \in H^1(e_j) \ for \ j=1,\ldots,N \},
	\end{split} \]
where $C(G)$ is the set of all continuous functions on $G$.

In this paper we prove that $\varphi_{\varepsilon}$ Mosco converges to $\varphi$ as $\varepsilon \to +0$.
This statement is our main theorem.
We discuss more details in section 5.

\section{The compactness condition}

To prove the $\Gamma$-convergence of our energy functionals, 
we have to study the behavior as $\varepsilon \to 0$ of any functional sequence $\{ u^{\varepsilon} \}_{\varepsilon \in I}$
with $u^{\varepsilon} \in L^2(\Omega_{\varepsilon}, d\mu_{\varepsilon})$ 
satisfying the following condition: there exists a constant $M > 0$ such that
	\begin{equation}\label{e:com}
		\sup_{\varepsilon \in I} \, \left\{ \varphi_{\varepsilon}^K(u^{\varepsilon}) 
			+ \| u^{\varepsilon} \|_{L^2(\Omega_{\varepsilon},d\mu_{\varepsilon})}^2 \right\} \leq M.
	\end{equation}
To do so, we divide the thin domains and consider the problem on each junction domain $J_{\varepsilon}$ and 
tubular domain $D_{j,\varepsilon}$ separately.

\subsection{In the tubular domains} \ \\[-0.2cm]

First, we consider the behavior of $\{ u^{\varepsilon} \}_{\varepsilon \in I}$ in the tubular domain $D_{j,\varepsilon}$.
It is convenient to study the problem in some domain independent to $\varepsilon$.
So we define a fixed tube $Q_j$ in $\mathbb{R}^n$ as
\begin{equation*}\label{d:Qj}
	Q_j := (0,l_j) \times B_1
\end{equation*}
and define a functional sequence $\{ w_j^{\varepsilon} \}_{\varepsilon \in I} \subset L^2(Q_j)$ as follows:
\begin{equation}\label{d:we}
	w_j^{\varepsilon}(y) := u^{\varepsilon} ( R_j \alpha_{\varepsilon}(y) ) \qquad \mathrm{for} \ y \in Q_j,
\end{equation}
where $R_j$ is the orthogonal matrix satisfying (\ref{d:Rj}) and 
$\alpha_{\varepsilon} : Q_j \rightarrow (\varepsilon l, l_j) \times B_{\varepsilon}$ is a transformation of variables defined by
\begin{equation}\label{d:Ae}
	\alpha_{\varepsilon}(y) := {^t}\! \left( \frac{l_j - \varepsilon l}{l_j} \, y_1 + \varepsilon l, \varepsilon y^{\prime} \right)
	\qquad \mathrm{for} \ y = {^t\!(}y_1, y^{\prime}) \in Q_j.
\end{equation}
It is easy to calculate the Jacobian of the transformation of variables $\alpha_{\varepsilon}$:
\[ \det (\nabla \alpha_{\varepsilon}(y)) = (l_j - \varepsilon l)l_j^{-1}\varepsilon^{n-1} \]

\begin{lemma}\label{l:we}
	Suppose that a functional sequence $\{ u^{\varepsilon} \}_{\varepsilon \in I}$ 
	with $u^{\varepsilon} \in L^2(\Omega_{\varepsilon}, d\mu_{\varepsilon})$ satisfies the condition (\ref{e:com}).
	Let $\{ w_j^{\varepsilon} \}_{\varepsilon \in I}$ be the functional sequence defined by (\ref{d:we}). 
	Then the following properties hold.
	\begin{enumerate}
	\item For $j=1,\ldots,N$, it follows that
			\[ \lim_{\varepsilon \to +0} \, \int_{Q_j}  |\nabla_{y^{\prime}} w_j^{\varepsilon}(y)|^2 \, dy = 0, \]
			where $\nabla_{y^{\prime}}$ means the derivative with respect to $y^{\prime}$,
				that is $\displaystyle | \nabla_{y^{\prime}} w_j^{\varepsilon}(y)|^2
					= \sum_{i=2}^{n} \left| \frac{\partial w_j^{\varepsilon}}{\partial y_i} (y) \right|^2$.
	\item There exists a subsequence $\{ w_j^{\varepsilon_m} \}_{m=1}^{\infty}$ of $\{ w_j^{\varepsilon} \}_{\varepsilon \in I}$ such that
			$\{ w_j^{\varepsilon_m} \}_{m=1}^{\infty}$ converges to some function
			$\psi_j^{\infty} = \psi_j^{\infty}(s) \in H^1(0,l_j)$ in $L^2(Q_j)$ for $j=1,\ldots,N$ , that is
				\[ \lim_{m \to \infty} \, \int_{Q_j}  |w_j^{\varepsilon_m}(y) - \psi_j^{\infty}(y_1)|^2 \, dy = 0. \]
	\end{enumerate}
\end{lemma}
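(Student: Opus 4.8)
The plan is to transplant everything to the fixed tube $Q_j$ through the change of variables $\alpha_\varepsilon$ and to track how the anisotropic scaling interacts with the energy bound (\ref{e:com}). First I would compute $\nabla_y w_j^\varepsilon$ by the chain rule. Since $R_j$ is orthogonal and $\nabla \alpha_\varepsilon = \mathrm{diag}\bigl( (l_j - \varepsilon l)/l_j, \varepsilon, \ldots, \varepsilon \bigr)$, one obtains $\partial_{y_1} w_j^\varepsilon = \tfrac{l_j - \varepsilon l}{l_j}\,(R_j^{-1}\nabla u^\varepsilon)_1$ and $\partial_{y_i} w_j^\varepsilon = \varepsilon\,(R_j^{-1}\nabla u^\varepsilon)_i$ for $i \geq 2$, the gradient of $u^\varepsilon$ being evaluated at $R_j\alpha_\varepsilon(y)$. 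Using orthogonality of $R_j$ this yields the pointwise bound $|\nabla_{y'} w_j^\varepsilon|^2 \leq \varepsilon^2 |\nabla u^\varepsilon|^2$.

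For part (1) I would change variables $x = R_j\alpha_\varepsilon(y)$ in $\int_{Q_j} |\nabla_{y'} w_j^\varepsilon|^2\,dy$. The Jacobian $(l_j - \varepsilon l)\,l_j^{-1}\varepsilon^{n-1}$ together with the identity $dx = \omega\varepsilon^{n-1}\,d\mu_\varepsilon$ converts the integral into $\varepsilon^2\,\tfrac{l_j\omega}{l_j - \varepsilon l}\int_{D_{j,\varepsilon}} |\nabla u^\varepsilon|^2\,d\mu_\varepsilon$. Bounding the last integral by $\varphi_\varepsilon^K(u^\varepsilon)\leq M$ from (\ref{e:com}) and noting that $l_j - \varepsilon l$ stays bounded away from $0$ for $\varepsilon\in I$, the whole quantity is $O(\varepsilon^2)\to 0$. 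This is the cleanest step.

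For part (2) the same change of variables applied to $\|w_j^\varepsilon\|_{L^2(Q_j)}^2$ and to $\int_{Q_j}|\partial_{y_1} w_j^\varepsilon|^2\,dy$ produces the uniform bounds $\tfrac{l_j\omega}{l_j - \varepsilon l}\int_{D_{j,\varepsilon}}|u^\varepsilon|^2\,d\mu_\varepsilon$ and $\tfrac{(l_j - \varepsilon l)\omega}{l_j}\int_{D_{j,\varepsilon}}|\nabla u^\varepsilon|^2\,d\mu_\varepsilon$, both controlled by $M$ via (\ref{e:com}). Combined with part (1), this shows $\{w_j^\varepsilon\}$ is bounded in $H^1(Q_j)$ uniformly in $\varepsilon$. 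By the Rellich--Kondrachov theorem I would extract a subsequence $w_j^{\varepsilon_m}$ converging strongly in $L^2(Q_j)$ and weakly in $H^1(Q_j)$ to some $\psi_j^\infty \in H^1(Q_j)$; iterating over the finitely many indices $j$ gives one common subsequence.

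It remains to show that $\psi_j^\infty$ is independent of $y'$. Since $w_j^{\varepsilon_m}$ converges weakly to $\psi_j^\infty$ in $H^1(Q_j)$, the transverse gradients $\nabla_{y'} w_j^{\varepsilon_m}$ converge weakly to $\nabla_{y'}\psi_j^\infty$ in $L^2(Q_j)$, so by weak lower semicontinuity of the norm together with part (1), $\int_{Q_j} |\nabla_{y'}\psi_j^\infty|^2\,dy \leq \liminf_m \int_{Q_j} |\nabla_{y'} w_j^{\varepsilon_m}|^2\,dy = 0$. Hence $\nabla_{y'}\psi_j^\infty = 0$ a.e.\ and $\psi_j^\infty = \psi_j^\infty(y_1)$, which lies in $H^1(0,l_j)$ after integrating over the cross-section $B_1$. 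The main obstacle is exactly this final identification: the bound (\ref{e:com}) only controls the full gradient, so it is the $\varepsilon^2$-decay of the transverse part established in part (1) that forces the dimensional reduction, and one must pass this vanishing to the \emph{weak limit} rather than hope for it along the original sequence.
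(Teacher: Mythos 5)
Your proposal is correct and follows essentially the same route as the paper: the anisotropic change of variables $x = R_j\alpha_{\varepsilon}(y)$ to convert the energy bound (\ref{e:com}) into an $O(\varepsilon^2)$ estimate for the transverse gradient, uniform $H^1(Q_j)$ bounds on $w_j^{\varepsilon}$, Rellich--Kondrachov extraction, and identification of the limit as a function of $y_1$ alone via the vanishing of $\nabla_{y^{\prime}}$ in the limit. The only cosmetic difference is that you pass the vanishing to the limit by weak lower semicontinuity while the paper notes that $\nabla_{y^{\prime}} w_j^{\varepsilon_m} \to 0$ strongly in $L^2(Q_j)$, which are interchangeable here.
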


\begin{proof}
	By the transformation of variables $y = \alpha_{\varepsilon}^{-1}(R_j^{-1}x)$ defined by (\ref{d:Ae}), we obtain
	\begin{equation*}\label{e:we-1}
		\begin{split}
		\int_{Q_j}  |\nabla_{y^{\prime}} w_j^{\varepsilon}(y)|^2 dy
		&= \frac{l_j}{(l_j - \varepsilon l)\varepsilon^{n-3}} \, 
			\int_{D_{j,\varepsilon}}  |\nabla_{x^{\prime}} u^{\varepsilon}(x)|^2 \, dx \\[0.1cm]
		&= \frac{l_j \omega \varepsilon^2}{l_j - \varepsilon l} \, 
			\int_{D_{j,\varepsilon}}  |\nabla_{x^{\prime}} u^{\varepsilon}|^2 \, d\mu_{\varepsilon} \\[0.1cm]
		&\leq \frac{l_j \omega M \varepsilon^2}{l_j - \varepsilon l}
 		\end{split}
	\end{equation*}
	since the condition (\ref{e:com}). 
	So the claim \ref{l:we} (1) holds.

	Also by the similar calculations we have
	\begin{equation}\label{e:we-2}
		\begin{split}
		\int_{Q_j}  \left| \frac{\partial w_j^{\varepsilon}}{\partial y_1}(y) \right|^2 dy
		&= \frac{l_j - \varepsilon l}{l_j \varepsilon^{n-1}} \, 
			\int_{D_{j,\varepsilon}}  \left| \frac{\partial u^{\varepsilon}}{\partial x_1}(x) \right|^2 dx \\[0.1cm]
		&= \frac{(l_j -\varepsilon l)\omega}{l_j} \, 
			\int_{D_{j,\varepsilon}}  \left| \frac{\partial u^{\varepsilon}}{\partial x_1} \right|^2 d\mu_{\varepsilon} \\[0.1cm]
		&\leq \omega M,
		\end{split}
	\end{equation}
	and
	\begin{equation*}\label{e:we-3}
		\begin{split}
		\int_{Q_j}  |w_j^{\varepsilon}(y)|^2 \, dy
		&= \frac{l_j}{(l_j - \varepsilon l) \varepsilon^{n-1}} \, \int_{D_{j,\varepsilon}}  |u^{\varepsilon}(x)|^2 \, dx \\[0.1cm]
		&= \frac{l_j \omega}{l_j - \varepsilon l} \, \int_{D_{j,\varepsilon}}  |u^{\varepsilon}|^2 \, d\mu_{\varepsilon} \\[0.1cm]
		&\leq \frac{l_j \omega M}{l_j - \varepsilon_0 l}.
		\end{split}
	\end{equation*}

	From these calculations, $\{ w_j^{\varepsilon} \}_{\varepsilon \in I}$ is bounded in $H^1(Q_j)$.
	Therefore there exist a subsequence $\{ \varepsilon_m \}_{m=1}^{\infty} \subset I, \, \varepsilon_m \searrow 0$ 
	and a function $w_j^{\infty} \in H^1(Q_j)$ such that
	\begin{equation*}\label{e:we-4}
		w_j^{\varepsilon_m} \ \rightarrow \ w_j^{\infty} \quad \mathrm{strongly \ in} \ L^2(Q_j), 
	\end{equation*}
	\begin{equation*}\label{e:we-5}
		\nabla w_j^{\varepsilon_m} \ \rightharpoonup \ \nabla w_j^{\infty} \quad \mathrm{weakly \ in} \ L^2(Q_j),
	\end{equation*}
	as $m \to \infty$ for $j=1,\ldots,N$.
	Because of Lemma \ref{l:we}(1), it follows that 
	\[ \nabla_{y^{\prime}} w_j^{\varepsilon_m} \ \rightarrow \ 0 \qquad \mathrm{in} \ L^2(Q_j) \qquad (m \to \infty), \]
	then $\nabla_{y^{\prime}} w_j^{\infty} = 0$,
	which in terms implies that we can take some function $\psi_j^{\infty} = \psi_j^{\infty}(s) \in H^1(0,l_j)$ satisfying
	\begin{equation}\label{d:psi}
		w_j^{\infty}(y) = \psi_j^{\infty}(y_1) \qquad \mathrm{for} \ \ y \in Q_j.
	\end{equation}
	This function $\psi_j^{\infty}$ satisfies the claim in Lemma \ref{l:we}(2).
\end{proof}

\bigskip

Next, we prove that the function $\psi_j^{\infty}$ is the limit of $u^{\varepsilon_m}$ 
in the tubular domain $D_{j,\varepsilon_m}$ as $m \to \infty$.
\begin{lemma}\label{l:u-psi}
	For each $j=1,\ldots,N$, the functional sequence $\{ u^{\varepsilon_m} \}_{m=1}^{\infty}$ converges to
	the function $\psi_j^{\infty}$ defined by (\ref{d:psi})
	in the tubular domain $D_{j,\varepsilon_m}$ as $m \to \infty$, that is
	\begin{equation*}\label{e:u-p1}
		\lim_{m \to \infty} \, 
			\int_{D_{j,\varepsilon_m}}  |u^{\varepsilon_m} - \psi_j^{\infty} \circ f_{\varepsilon_m}|^2 \, d\mu_{\varepsilon_m} = 0.
	\end{equation*}
\end{lemma}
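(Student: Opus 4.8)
The plan is to transport the integral over $D_{j,\varepsilon_m}$ to the fixed tube $Q_j$ by the same change of variables $x = R_j\alpha_{\varepsilon}(y)$ used in Lemma \ref{l:we}, and then to compare $w_j^{\varepsilon_m}$ with $\psi_j^{\infty}$ using the $L^2$-convergence already established in Lemma \ref{l:we}(2). First I would recall that on $D_{j,\varepsilon}$ the Jacobian of $x = R_j\alpha_\varepsilon(y)$ is $\det(\nabla(R_j\alpha_\varepsilon)) = (l_j-\varepsilon l)l_j^{-1}\varepsilon^{n-1}$, so that after substituting and invoking the definition (\ref{d:dmue}) of $d\mu_\varepsilon$, the prefactor $1/(\omega\varepsilon^{n-1})$ is converted into the bounded constant $(l_j-\varepsilon l)/(\omega l_j)$.

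The essential bookkeeping step is to write both $u^{\varepsilon}$ and $\psi_j^{\infty}\circ f_\varepsilon$ in the $y$-coordinate. By (\ref{d:we}) we have $u^{\varepsilon}(R_j\alpha_\varepsilon(y)) = w_j^\varepsilon(y)$, while setting $\xi = R_j^{-1}x$ and using $f_\varepsilon(x) = \pi_1(\xi) = \xi_1$ on $D_{j,\varepsilon}$ together with the first component of (\ref{d:Ae}), namely $\xi_1 = \tfrac{l_j-\varepsilon l}{l_j}y_1 + \varepsilon l$, gives $(\psi_j^{\infty}\circ f_\varepsilon)(R_j\alpha_\varepsilon(y)) = \psi_j^{\infty}\!\bigl(\tfrac{l_j-\varepsilon l}{l_j}y_1 + \varepsilon l\bigr)$. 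Hence the integral in the statement equals
\[ \frac{l_j-\varepsilon_m l}{\omega l_j}\int_{Q_j}\Bigl|w_j^{\varepsilon_m}(y) - \psi_j^{\infty}\!\bigl(\tfrac{l_j-\varepsilon_m l}{l_j}y_1 + \varepsilon_m l\bigr)\Bigr|^2 \, dy. \]

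I would then split the integrand through the triangle inequality into $|w_j^{\varepsilon_m}(y) - \psi_j^{\infty}(y_1)|^2$ and $|\psi_j^{\infty}(y_1) - \psi_j^{\infty}(\tfrac{l_j-\varepsilon_m l}{l_j}y_1 + \varepsilon_m l)|^2$. The first integrates to $0$ along the subsequence directly by Lemma \ref{l:we}(2). For the second, the point is that the affine shift of the argument is uniformly small: $|\tfrac{l_j-\varepsilon_m l}{l_j}y_1 + \varepsilon_m l - y_1| = \varepsilon_m l\,|1 - y_1/l_j| \le \varepsilon_m l$ for all $y_1 \in (0,l_j)$. Since $\psi_j^{\infty} \in H^1(0,l_j)$ embeds into $C([0,l_j])$ and is therefore uniformly continuous, this term tends to $0$ uniformly in $y$; alternatively, writing the difference as the integral of $(\psi_j^{\infty})'$ over an interval of length $\le \varepsilon_m l$ and applying Cauchy--Schwarz bounds it by $\varepsilon_m l\,\|(\psi_j^{\infty})'\|_{L^2(0,l_j)}^2$, which is cleaner for integration over $Q_j$. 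Combining the two estimates and using that $(l_j-\varepsilon_m l)/(\omega l_j)$ stays bounded yields the claim.

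I do not expect a serious obstacle here; the only delicate point is correctly tracking how the arc-length projection $f_{\varepsilon}$ and the rescaling $\alpha_\varepsilon$ act on the first coordinate, so that the two comparison functions are seen to differ only by an $O(\varepsilon_m)$ reparametrization of $\psi_j^{\infty}$ along the edge. That reparametrization is harmless precisely because $\psi_j^{\infty}$ is $H^1$, hence continuous, on the one-dimensional edge.
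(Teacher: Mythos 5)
Your proposal is correct and follows essentially the same route as the paper: the change of variables $x = R_j\alpha_{\varepsilon_m}(y)$ with the constant $a_{j,m}=(l_j-\varepsilon_m l)/l_j$, the triangle-inequality split into $|w_j^{\varepsilon_m}(y)-\psi_j^{\infty}(y_1)|^2$ (handled by Lemma \ref{l:we}(2)) and the reparametrization error $|\psi_j^{\infty}(y_1)-\psi_j^{\infty}(a_{j,m}y_1+\varepsilon_m l)|^2$ (handled by $\psi_j^{\infty}\in H^1(0,l_j)\subset C([0,l_j])$). Your Cauchy--Schwarz bound $\varepsilon_m l\,\|(\psi_j^{\infty})'\|_{L^2(0,l_j)}^2$ for the second term is a slightly more explicit quantification than the paper's appeal to continuity, but the argument is the same.
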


\begin{proof}
	By using the transformation of variables $x = R_j \alpha_{\varepsilon}(y)$, it follows that
	\begin{equation}\label{e:u-p2}
		\begin{split}
		&\int_{D_{j,\varepsilon_m}}  |u^{\varepsilon_m} - \psi_j^{\infty} \circ f_{\varepsilon_m}|^2 \, d\mu_{\varepsilon_m} \\[0.1cm]
		&\qquad\qquad = \frac{a_{j,m}}{\omega}
			\int_{Q_j}  |w_j^{\varepsilon_m}(y) - \psi_j^{\infty}(a_{j,m} y_1 + \varepsilon_m l)|^2 \, dy \\[0.1cm]
		&\qquad\qquad \leq \frac{2a_{j,m}}{\omega} \int_{Q_j} |w_j^{\varepsilon_m}(y) - \psi_j^{\infty}(y_1)|^2 \, dy 
			+ 2a_{j,m} \int_{0}^{l_j} |\psi_j^{\infty}(s) - \psi_j^{\infty}(a_{j,m}s + \varepsilon_m l)|^2 \, ds
		\end{split}
	\end{equation}
	where a constant $a_{j,m}$ is defined by
	\begin{equation}\label{d:ajm}
		a_{j,m} := (l_j - \varepsilon_m l)/l_j
	\end{equation}
	and converges to $1$ as $m \to \infty$.

	Since Lemma \ref{l:we} (2) and $\psi_j^{\infty} \in H^1(0,l_j) \subset C([0,l_j])$, 
	the right hand side of (\ref{e:u-p2}) converges to $0$ as $m \to \infty$.
	Therefore we get the conclusion.
\end{proof}

\bigskip

Hereafter we take the subsequence $\{ u^{\varepsilon_m} \}_{m=1}^{\infty}$, which satisfies the claim in Lemma \ref{l:we} (2),
of $\{ u^{\varepsilon} \}_{\varepsilon \in I}$.
So we obtain the limit functions $\{ \psi_j^{\infty} \}_{j=1}^N$ on edges.
Next we consider the connecting conditions at the origin $O$ about these limit functions.

\subsection{In the junction domain} \ \\[-0.2cm]

Next, we study the behavior of $\{ u^{\varepsilon} \}_{\varepsilon \in I}$, which satisfies the condition (\ref{e:com}), 
in the junction domain $J_{\varepsilon}$.
Especially we consider the continuity of the limit function at the junction point $O$. 
To do so, we fix a constant $a \in (l, \min l_j/\varepsilon_0)$ and define
\begin{equation*}\label{d:Djea}
	D_{j,\varepsilon}^a := \{ x = R_j y \in \mathbb{R}^n \mid \varepsilon l \leq y_1 < \varepsilon a, \ |y^{\prime}| < \varepsilon \},
	\qquad J_{\varepsilon}^a := J_{\varepsilon} \cup \bigcup_{j=1}^{N} D_{j,\varepsilon}^a
\end{equation*}
for $\varepsilon \in I$ and $j=1,\ldots,N$. 
Next, we define a fixed domain $J^a$ as $J^a = \varepsilon^{-1} J_{\varepsilon}^a$ 
and a functional sequence $\{ v^{\varepsilon} \}_{\varepsilon \in I} \subset L^2(J^a)$ as follows:
\begin{equation}\label{d:ve}
	v^{\varepsilon}(z) := u^{\varepsilon} ( \varepsilon z ) \qquad	\mathrm{for} \ z \in J^a = \varepsilon^{-1} J_{\varepsilon}^a.
\end{equation}

Since the junction domain $J_{\varepsilon}$ squeezes to the origin $O$,
we expect that the functional sequence $\{ v^{\varepsilon} \}_{\varepsilon \in I}$ converges to some constant as $\varepsilon \to +0$.

\begin{lemma}\label{l:ve}
	Suppose that a functional sequence $\{ u^{\varepsilon} \}_{\varepsilon \in I}$ 
	with $u^{\varepsilon} \in L^2(\Omega_{\varepsilon}, d\mu_{\varepsilon})$ satisfies the condition (\ref{e:com}).
	Let $\{ v^{\varepsilon} \}_{\varepsilon \in I}$ be the functional sequence defined by (\ref{d:ve}). 
	Then the following properties hold.
	\begin{enumerate}
	\item It follows that
			\[ \lim_{\varepsilon \to +0} \, \int_{J^a}  |\nabla v^{\varepsilon}(z)|^2 \, dz = 0. \]
	\item There exists the sequence $\{ \xi_{\varepsilon} \}_{\varepsilon \in I}$ such that
			\[ \lim_{\varepsilon \to +0} \, \int_{J^a}  |v^{\varepsilon}(z) - \xi_{\varepsilon}|^2 \, dz = 0, \]
			where $\xi_{\varepsilon}$ is defined by
			\begin{equation}\label{d:xie}
				\xi_{\varepsilon} := \frac{1}{|J^a|} \, \int_{J^a}  v^{\varepsilon}(z) \, dz.
			\end{equation}
	\end{enumerate}
\end{lemma}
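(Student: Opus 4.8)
The plan is to reduce both assertions to the uniform energy bound (\ref{e:com}) by exploiting the fact that the rescaling $z \mapsto \varepsilon z$ in (\ref{d:ve}) carries the fixed domain $J^a$ onto $J^a_{\varepsilon} \subset \Omega_{\varepsilon}$, in complete analogy with the tubular computation of Lemma \ref{l:we}.

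For part (1), first I would apply the chain rule to the definition (\ref{d:ve}), which gives $\nabla v^{\varepsilon}(z) = \varepsilon\,(\nabla u^{\varepsilon})(\varepsilon z)$; note that the bound (\ref{e:com}) forces $u^{\varepsilon} \in H^1(\Omega_{\varepsilon}, d\mu_{\varepsilon})$, so this identity is valid in the Sobolev sense. Next I would change variables $x = \varepsilon z$ (so $dz = \varepsilon^{-n}\,dx$ and $J^a$ is carried to $J^a_{\varepsilon}$) and rewrite the Lebesgue measure in terms of $d\mu_{\varepsilon}$ via (\ref{d:dmue}), i.e. $dx = \omega\varepsilon^{n-1}\,d\mu_{\varepsilon}$. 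The powers of $\varepsilon$ then combine as $\varepsilon^2\cdot\varepsilon^{-n}\cdot\omega\varepsilon^{n-1} = \omega\varepsilon$, leaving
\[ \int_{J^a} |\nabla v^{\varepsilon}(z)|^2\,dz = \omega\varepsilon \int_{J^a_{\varepsilon}} |\nabla u^{\varepsilon}|^2\,d\mu_{\varepsilon} \leq \omega M \varepsilon, \]
where the last step uses $J^a_{\varepsilon} \subset \Omega_{\varepsilon}$ together with (\ref{e:com}). Letting $\varepsilon \to +0$ yields claim (1).

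For part (2), the key remark is that $J^a = \varepsilon^{-1} J^a_{\varepsilon}$ is in fact independent of $\varepsilon$: scaling $J_{\varepsilon} = (\varepsilon/\varepsilon_0)J$ and $D^a_{j,\varepsilon}$ by $\varepsilon^{-1}$ produces $\varepsilon_0^{-1}J$ together with the fixed truncated tubes $\{ R_j z \mid l \leq z_1 < a,\ |z^{\prime}| < 1 \}$, neither of which involves $\varepsilon$. Because $J^a$ is a fixed, connected domain whose boundary is piecewise $C^1$ (inherited from the regularity of $\Sigma$ and the flat tube walls), the Poincar\'e--Wirtinger inequality holds on it with a constant $C_P = C_P(J^a)$ that does \emph{not} depend on $\varepsilon$. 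With $\xi_{\varepsilon}$ the mean value (\ref{d:xie}), this gives
\[ \int_{J^a} |v^{\varepsilon}(z) - \xi_{\varepsilon}|^2\,dz \leq C_P \int_{J^a} |\nabla v^{\varepsilon}(z)|^2\,dz, \]
and the right-hand side tends to $0$ by part (1), which establishes claim (2).

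The Jacobian bookkeeping and the change of measure are entirely parallel to those already carried out in Lemma \ref{l:we}, so the only genuinely delicate point is the $\varepsilon$-independence of the Poincar\'e constant. This is precisely why the problem is first transplanted from the shrinking domains $J^a_{\varepsilon}$ to the single fixed domain $J^a$: once the connectivity and the piecewise-$C^1$ regularity of $\partial J^a$ are recorded, the uniform Poincar\'e--Wirtinger inequality is standard, and part (2) follows immediately from the vanishing of the gradient established in part (1).
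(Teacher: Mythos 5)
Your proposal is correct and follows essentially the same route as the paper: part (1) is the identical change-of-variables computation $\int_{J^a}|\nabla v^{\varepsilon}|^2\,dz = \omega\varepsilon\int_{J^a_{\varepsilon}}|\nabla u^{\varepsilon}|^2\,d\mu_{\varepsilon}\leq M\omega\varepsilon$, and part (2) is the Poincar\'e--Wirtinger inequality on the fixed domain $J^a$ combined with part (1). The only difference is that you spell out the $\varepsilon$-independence of $J^a$ and of the Poincar\'e constant, which the paper leaves implicit.
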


\begin{proof}
	By using the transformation of variables $z = \varepsilon^{-1}x$, we have
	\[ \begin{split}
		\int_{J^a}  |\nabla v^{\varepsilon}(z)|^2 \, dz
		&= \frac{1}{\varepsilon^{n-2}} \, \int_{J_{\varepsilon}^a}  |\nabla u^{\varepsilon}(x)|^2 \, dx \\[0.1cm]
		&= \omega \varepsilon \int_{J_{\varepsilon}^a}  |\nabla u^{\varepsilon}|^2 \, d\mu_{\varepsilon} \\[0.1cm]
		&\leq M \omega \varepsilon.
		\end{split} \]
	So Lemma \ref{l:ve} (1) holds.

	The complex number $\xi_{\varepsilon}$ denotes the mean value of $v^{\varepsilon}$ in $J^a$ for $\varepsilon \in I$.
	Then we obtain the claim in Lemma \ref{l:ve} (2) by using the Poincar\'{e} inequality and Lemma \ref{l:ve} (1).
\end{proof}

\subsection{Continuity of the limit function} \ \\[-0.2cm]

In this part we mention that the limit function $\{ \psi_j^{\infty} \}_{j=1}^N$ belongs to the effective domain of $\varphi$
defined by (\ref{d:phi}).

\begin{lemma}\label{l:psi-c}
	Suppose that a functional sequence $\{ u^{\varepsilon} \}_{\varepsilon \in I}$ 
	with $u^{\varepsilon} \in L^2(\Omega_{\varepsilon}, d\mu_{\varepsilon})$ satisfies
	the condition (\ref{e:com}).
	Then there exists a subsequence $\{ u^{\varepsilon_k} \}_{k=1}^{\infty}$ of $\{ u^{\varepsilon} \}_{\varepsilon \in I}$
	and a function $\psi^{\infty} \in H^1(G)$ satisfying the following condition
	\begin{equation}\label{e:u-P}
		\lim_{k \to \infty} \, 
			\int_{\Omega_{\varepsilon_k}} |u^{\varepsilon_k} - \psi^{\infty} \circ f_{\varepsilon_k}|^2 \, d\mu_{\varepsilon_k} = 0.
	\end{equation}
\end{lemma}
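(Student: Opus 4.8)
The plan is to assemble the local limits obtained on the tubular pieces (Lemma~\ref{l:we} and Lemma~\ref{l:u-psi}) and on the junction piece (Lemma~\ref{l:ve}) into a single function on $G$, and then to verify that the matching at the origin forces continuity, so that $\psi^{\infty} \in H^1(G)$. First I would pass to the subsequence $\{ u^{\varepsilon_m} \}$ furnished by Lemma~\ref{l:we}(2), which produces edge limits $\psi_j^{\infty} \in H^1(0,l_j)$ for each $j=1,\ldots,N$, and by Lemma~\ref{l:u-psi} these satisfy $\|u^{\varepsilon_m} - \psi_j^{\infty}\circ f_{\varepsilon_m}\|_{L^2(D_{j,\varepsilon_m},d\mu_{\varepsilon_m})} \to 0$. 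Simultaneously, Lemma~\ref{l:ve} gives a constant sequence $\xi_{\varepsilon}$ such that $v^{\varepsilon}$ concentrates on $\xi_{\varepsilon}$ over the rescaled junction $J^a$. Since $|J_{\varepsilon}| = O(\varepsilon^n)$ and $d\mu_{\varepsilon}$ carries the weight $\omega^{-1}\varepsilon^{-(n-1)}$, the junction contribution to the $L^2(\Omega_{\varepsilon},d\mu_{\varepsilon})$-mass is $O(\varepsilon)$; so the candidate limit $\psi^{\infty}$ is determined entirely by the edge data, and the role of $\xi_{\varepsilon}$ is to control the trace values $\psi_j^{\infty}(0)$ at the common vertex.

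The key step is the matching at $O$. On the overlap region $D_{j,\varepsilon}^a$, which belongs both to the tubular description (through $w_j^{\varepsilon}$) and to the junction description (through $v^{\varepsilon}$), I would compare the two limits. From Lemma~\ref{l:ve}(2), $v^{\varepsilon} \to \xi_{\varepsilon}$ in $L^2(J^a)$, so on each $D_{j,\varepsilon}^a$ the rescaled function is asymptotically the \emph{same} constant $\xi_{\varepsilon}$, independent of $j$. On the other hand, restricting Lemma~\ref{l:u-psi} (or the convergence $w_j^{\varepsilon_m}\to\psi_j^{\infty}(y_1)$ from Lemma~\ref{l:we}(2)) to the slab near $y_1 = \varepsilon l$ shows that the tubular limit near the vertex is governed by the boundary value $\psi_j^{\infty}(0)$. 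Equating the two on the shared overlap gives $\psi_j^{\infty}(0) = \lim_m \xi_{\varepsilon_m}$ for every $j$; in particular the limit $\xi := \lim_m \xi_{\varepsilon_m}$ exists (extract a further subsequence $\{\varepsilon_k\}$ if necessary, using that $\xi_{\varepsilon}$ is bounded by the uniform $L^2$ bound in~(\ref{e:com})) and is common to all edges. Hence all the edge functions share the same value at $O$, which is precisely the continuity condition defining $C(G)$, so $\psi^{\infty}$ defined edgewise by $\psi^{\infty}|_{e_j} = \psi_j^{\infty}$ lies in $H^1(G)$.

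Finally I would establish the global $L^2$-convergence~(\ref{e:u-P}) by splitting $\Omega_{\varepsilon_k} = J_{\varepsilon_k} \cup \bigcup_j D_{j,\varepsilon_k}$. On each tube the term vanishes by Lemma~\ref{l:u-psi}; on the junction $J_{\varepsilon_k}$ the integral $\int_{J_{\varepsilon_k}} |u^{\varepsilon_k} - \psi^{\infty}\circ f_{\varepsilon_k}|^2\,d\mu_{\varepsilon_k}$ rescales, via $z = \varepsilon_k^{-1}x$, to $\omega^{-1}\varepsilon_k \int_{J^a}|v^{\varepsilon_k} - \psi^{\infty}\circ f_{\varepsilon_k}(\varepsilon_k z)|^2\,dz$, which tends to $0$ because $v^{\varepsilon_k}\to\xi$, $\psi^{\infty}\circ f_{\varepsilon_k}(\varepsilon_k\,\cdot)\to\xi$ (by continuity of $\psi^{\infty}$ at $O$), and the prefactor carries the extra $\varepsilon_k$. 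The main obstacle is the matching argument at the vertex: one must carefully reconcile the two different rescalings — the anisotropic tube scaling $\alpha_{\varepsilon}$ of~(\ref{d:Ae}) and the isotropic junction scaling $z = \varepsilon^{-1}x$ — on the common overlap $D_{j,\varepsilon}^a$, and use the $H^1$-type control together with a trace/Poincar\'e estimate to transfer the constant value $\xi_{\varepsilon}$ on the junction to the boundary value $\psi_j^{\infty}(0)$ on each edge. This is exactly the point the introduction flags as the most difficult part of the argument.
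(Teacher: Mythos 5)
Your proposal follows essentially the same route as the paper: the same decomposition into tubes and junction, the same matching of the edge limits $\psi_j^{\infty}(+0)$ to the junction averages $\xi_{\varepsilon_m}$ on the overlap regions $D_{j,\varepsilon_m}^a$, and the same final splitting of (\ref{e:u-P}) using Lemma \ref{l:u-psi} on the tubes and the $O(\varepsilon)$ volume factor on $J_{\varepsilon_k}$. The only place the paper is more explicit is the ``tube side'' of the matching, where it uses the compactness of the trace embedding $H^{1/2}(\partial Q_j)\hookrightarrow L^2(\partial Q_j)$ to get $w_j^{\varepsilon_m}(0,\cdot)\to\psi_j^{\infty}(0)$ in $L^2(B_1)$, together with a fundamental-theorem-of-calculus bound of order $\varepsilon_m$ to move off the face $y_1=0$ --- exactly the ``trace/Poincar\'e estimate with $H^1$ control'' you flag as the needed ingredient.
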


\begin{proof}
	First, we take a sequence $\{ \varepsilon_m \}_{m=1}^{\infty} \subset I$ ($\varepsilon_m \searrow 0$) such that  
	all claims in Lemma \ref{l:we} and Lemma \ref{l:ve} are satisfied.

	To show that $\psi_j^{\infty}(+0)$ is independent to $j$, we consider the following limit:
	\begin{equation*}\label{e:psi-c1}
		\lim_{m \to \infty} \, \frac{\omega}{\varepsilon_m} \, 
			\int_{\varepsilon_m l}^{\varepsilon_m a} |\psi_j^{\infty}(s) - \xi_{\varepsilon_m}|^2 \, ds,
	\end{equation*}
	where $\{ \xi_{\varepsilon} \}$ is the sequence defined by (\ref{d:xie}).
	Here, we have
	\begin{equation}\label{e:psi-c3}
		\begin{split}
		&\frac{\omega}{\varepsilon_m} \, 
			\int_{\varepsilon_m l}^{\varepsilon_m a} |\psi_j^{\infty} (s) - \xi_{\varepsilon_m}|^2 \, ds \\[0.1cm]
			&\qquad\qquad\qquad = \frac{1}{\varepsilon_m^n} \, \int_{\varepsilon_m l}^{\varepsilon_m a} \int_{B_{\varepsilon_m}} 
				|\psi_j^{\infty} (y_1) - \xi_{\varepsilon_m}|^2 \, dy \\[0.1cm]
			&\qquad\qquad\qquad \leq \frac{2}{\varepsilon_m^n} \, \int_{D_{j,\varepsilon_m}^a}  
				( |\psi_j^{\infty} (\pi_1 R_j^{-1}x) - u^{\varepsilon_m}(x)|^2 + |u^{\varepsilon_m}(x) - \xi_{\varepsilon_m}|^2 ) \, dx.
		\end{split}
	\end{equation}
	We know already that the second term of the right hand side of (\ref{e:psi-c3}) converges to $0$ as $m \to \infty$ 
	because of Lemma \ref{l:ve}.
	In fact
	\begin{equation}\label{e:psi-c13}
		\frac{1}{\varepsilon_m^n} \, \int_{D_{j,\varepsilon_m}^a}  |u^{\varepsilon_m}(x) - \xi_{\varepsilon_m}|^2 \, dx
		\leq \int_{J^a}  |v^{\varepsilon_m}(z) - \xi_{\varepsilon_m}|^2 \, dz \quad \longrightarrow \quad 0 \qquad (m \to \infty).
	\end{equation}
	At the first term of the right hand side of (\ref{e:psi-c3}), 
	we use the transformation of variables $x = R_j \alpha_{\varepsilon_m}(y)$.
	Then
	\begin{equation}\label{e:psi-c4}
		\begin{split}
		&\frac{1}{\varepsilon_m^n} \, 
			\int_{D_{j,\varepsilon_m}^a}  |\psi_j^{\infty} (\pi_1 R_j^{-1}x) - u^{\varepsilon_m}(x)|^2 \, dx \\[0.1cm]
		&\qquad\qquad\qquad\qquad\qquad = \frac{a_{j,m}}{\varepsilon_m} \, \int_{0}^{a_{j,m}^{-1}(a-l) \varepsilon_m} \int_{B_1} 
			| \psi_j^{\infty} ( a_{j,m} y_1 + \varepsilon_m l ) - w_j^{\varepsilon_m}(y) |^2 \, dy^{\prime}dy_1
		\end{split} 
	\end{equation}
	where $a_{j,m}$ is the constant defined by (\ref{d:ajm}).
	By taking a new coordinate $s = \frac{a_{j,m} y_1}{(a-l)\varepsilon_m}$, 
	the right hand side of (\ref{e:psi-c4}) equals to
	\begin{equation}\label{e:psi-c5}
		(a-l) \int_{0}^{1} \int_{B_1}  | \psi_j^{\infty}(g_m(s)) - w_j^{\varepsilon_m}(h_m(s), y^{\prime}) |^2 \, dy^{\prime}ds
	\end{equation}
	where
	\[ g_m(s) := \varepsilon_m \{ (a-l)s + l \}, \qquad h_m(s) := a_{j,m}^{-1} \varepsilon_m (a-l) s. \]

	Firstly, it follows that
	\begin{equation}\label{e:psi-c6}
		\lim_{m \to \infty} \, \int_{0}^{1} |\psi_j^{\infty}(g_m(s)) - \psi_j^{\infty}(0) |^2 \, ds = 0
	\end{equation}
	since $\psi_j^{\infty} \in C([0,l_j])$ and $g_m(s) \to 0$ as $m \to \infty$.

	Secondly, $\{ w_j^{\varepsilon_m} \}_{m=1}^{\infty} \subset H^1(Q_j)$ is bounded,
	then $\{ w_j^{\varepsilon_m}|_{\partial Q_j} \}_{m=1}^{\infty} \subset H^{\frac{1}{2}}(\partial Q_j)$ is relatively compact 
	in $L^2(\partial Q_j)$. 
	So $w_j^{\varepsilon_m}|_{\partial Q_j}$ converges to $w_j^{\infty}|_{\partial Q_j}$ in $L^2(\partial Q_j)$.
	Therefore we get
	\begin{equation}\label{e:psi-c7}
		\lim_{m \to \infty} \, \int_{B_1}  |\psi_j^{\infty}(0) - w_j^{\varepsilon_m}(0,y^{\prime})|^2 \, dy^{\prime} = 0.
	\end{equation}

	Thirdly, we have
	\begin{equation*}
		\begin{split}
		\int_{0}^{1} \int_{B_1}  | w_j^{\varepsilon_m}(0, y^{\prime}) - w_j^{\varepsilon_m}(h_m(s), y^{\prime}) |^2 \, dy^{\prime}ds
		&\leq \int_{0}^{1} \int_{B_1}  h_m(s) 
			\int_{0}^{h_m(s)} \left| \frac{\partial w_j^{\varepsilon_m}}{\partial y_1}(\tau, y^{\prime}) \right|^2
				d\tau dy^{\prime}ds \\
		&\leq a_{j,m} \varepsilon_m (a-l) \int_{0}^{1} s \int_{B_1}  \int_{0}^{l_j}
				\left| \frac{\partial w_j^{\varepsilon_m}}{\partial y_1}(\tau, y^{\prime}) \right|^2 d\tau dy^{\prime}ds \\
		&\leq a_{j,m} \varepsilon_m (a-l) \int_{Q_j}  \left| \frac{\partial w_j^{\varepsilon_m}}{\partial y_1}(y) \right|^2 dy \\
		&\leq a_{j,m} \varepsilon_m (a-l) \omega M
		\end{split}
	\end{equation*}
	since (\ref{e:we-2}).
	Therefore we obtain
	\begin{equation}\label{e:psi-c8}
		\lim_{m \to \infty} \, 
		\int_{0}^{1} \int_{B_1}  | w_j^{\varepsilon_m}(0, y^{\prime}) - w_j^{\varepsilon_m}(h_m(s), y^{\prime}) |^2 \, dy^{\prime}ds = 0.
	\end{equation}

	From (\ref{e:psi-c5}), (\ref{e:psi-c6}), (\ref{e:psi-c7}) and (\ref{e:psi-c8}), it follows that
	\begin{equation}\label{e:psi-c9}
		\lim_{m \to \infty} \, \frac{1}{\varepsilon_m^n} \, \int_{D_{j,\varepsilon_m}^a} 
		|\psi_j^{\infty} (\pi_1 R_j^{-1}x) - u^{\varepsilon_m}(x)|^2 \, dx = 0.
	\end{equation}
	So the following limit
	\begin{equation}\label{e:psi-c10}
		\lim_{m \to \infty} \, \frac{1}{\varepsilon_m} \, 
			\int_{\varepsilon_m l}^{\varepsilon_m a} |\psi_j^{\infty} (s) - \xi_{\varepsilon_m}|^2 \, ds = 0
	\end{equation}
	holds by (\ref{e:psi-c3}), (\ref{e:psi-c13}) and (\ref{e:psi-c9}) for $j=1,\ldots,N$.

\medskip

	Moreover it follows that
	\begin{equation}\label{e:psi-c14}
		\frac{1}{\varepsilon_m} \, \int_{\varepsilon_m l}^{\varepsilon_m a} |\psi_i^{\infty}(s) - \psi_j^{\infty}(s)|^2 \, ds
		\leq \frac{2}{\varepsilon_m} \, \int_{\varepsilon_m l}^{\varepsilon_m a} ( |\psi_i^{\infty}(s) - \xi_{\varepsilon_m}|^2
				+ |\xi_{\varepsilon_m} - \psi_j^{\infty}(s)|^2 ) \, ds
	\end{equation}
	and
	\begin{equation}\label{e:psi-c15}
		\begin{split}
			\lim_{m \to \infty} \, \frac{1}{\varepsilon_m} \, 
				\int_{\varepsilon_m l}^{\varepsilon_m a} |\psi_i^{\infty}(s) - \psi_j^{\infty}(s)|^2 \, ds
			&= \lim_{m \to \infty} \, 
		\int_{l}^{a} |\psi_i^{\infty}(\varepsilon_m s^{\prime}) - \psi_j^{\infty}(\varepsilon_m s^{\prime})|^2 \, ds^{\prime} \\[0.1cm]
			&= (a-l)|\psi_i^{\infty}(+0) - \psi_j^{\infty}(+0)|^2.
		\end{split}
	\end{equation}
	Hence we obtain
	\[ |\psi_i^{\infty}(+0) - \psi_j^{\infty}(+0)| = 0 \]
	for $i,j=1,\ldots,N$ since (\ref{e:psi-c10}), (\ref{e:psi-c14}) and (\ref{e:psi-c15}).
	Then it follows that
	\begin{equation}\label{e:psi-c11}
		\psi_1^{\infty}(+0) = \cdots = \psi_N^{\infty}(+0) = \lim_{k \to \infty} \, \xi_{\varepsilon_{m_k}} =: v^{\infty}
	\end{equation}
	by taking some suitable subsequence $\{ \xi_{\varepsilon_{m_k}} \}_{k=1}^{\infty}$ of $\{ \xi_{\varepsilon_m} \}_{m=1}^{\infty}$.
	Therefore we can define a continuous function $\psi^{\infty} \in H^1(G)$ as
	\begin{equation*}
		\psi^{\infty} := \left\{ \begin{array}{cl}
			\psi_j^{\infty}(s) & \qquad \mathrm{on} \ \ e_j = \{ s \mid 0 < s < l_j \} \quad \mathrm{for} \ j=1,\ldots,N, \\[0.2cm]
			v^{\infty} & \qquad \mathrm{at} \ \ O.
			\end{array} \right.
	\end{equation*}	

	Lastly, we prove (\ref{e:u-P}).
	Here we replace $\varepsilon_{m_k}$ to $\varepsilon_k$ for simplicity.
	Because of Lemma \ref{l:u-psi}, we have to prove only this limit condition:
	\begin{equation}\label{e:u-P1}
		\lim_{k \to \infty} \, 
			\int_{J_{\varepsilon_k}} |u^{\varepsilon_k} - \psi^{\infty} \circ f_{\varepsilon_k}|^2 \, d\mu_{\varepsilon_k} = 0.
	\end{equation}
	Now, we have
	\[ \begin{split}
		\int_{J_{\varepsilon_k}}  |u^{\varepsilon_k} - \psi^{\infty} \circ f_{\varepsilon_k}|^2 \, d\mu_{\varepsilon_k}
		&\leq 2\int_{J_{\varepsilon_k}} ( |u^{\varepsilon_k} - \xi_{\varepsilon_k}|^2 
			+ |\xi_{\varepsilon_k} - \psi^{\infty} \circ f_{\varepsilon_k}|^2 ) \, d\mu_{\varepsilon_k} \\[0.1cm]
		&\leq \frac{2 \varepsilon_k}{\omega} \, \int_{J^a}  |v^{\varepsilon_k}(z) - \xi_{\varepsilon_k}|^2 \, dz
			+ 2 \| \xi_{\varepsilon_k} - \psi^{\infty} \|_{L^{\infty}(G)}^2 \frac{|J_{\varepsilon_k}|}{\omega \varepsilon_k^{n-1}}.
		\end{split} \]
	Hence (\ref{e:u-P1}) holds since (\ref{e:ordJD}), (\ref{e:psi-c11}) and Lemma \ref{l:ve} (2).
\end{proof}

\section{Convergences of energy functionals}

We discuss our main theorem \ref{p:M-c} in this section.

\begin{theorem}\label{p:G-c}
	The functional sequence $\{ \varphi_{\varepsilon} \}_{\varepsilon \in I}$ defined by (\ref{d:phie}) $\Gamma$-converges to
	the function $\varphi$ defined by (\ref{d:phi}) as $\varepsilon \to +0$ in the sense of Definition \ref{d:g-c}.
\end{theorem}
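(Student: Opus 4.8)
The plan is to verify the two defining conditions $(\mathrm{F}1)$ and $(\mathrm{F}2)$ of Definition \ref{d:g-c} separately, exploiting the additive splitting $\varphi_{\varepsilon} = \varphi_{\varepsilon}^K + \varphi_{\varepsilon}^V$ and $\varphi = \varphi^K + \varphi^V$. Throughout I would use that the potential part is governed by the behaviour of $u^{\varepsilon}$ on the junction, analysed in Lemma \ref{l:ve}, whereas the kinetic part is controlled by the stretched functions $w_j^{\varepsilon}$ of Lemma \ref{l:we}.

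For the liminf inequality $(\mathrm{F}1)$, let $u^{\varepsilon} \to u$ strongly and assume $\liminf_{\varepsilon} \varphi_{\varepsilon}(u^{\varepsilon}) < \infty$ (otherwise there is nothing to prove). Passing to a subsequence attaining the liminf, the nonnegativity of $\varphi_{\varepsilon}^V$ forces $\sup_{\varepsilon}( \varphi_{\varepsilon}^K(u^{\varepsilon}) + \| u^{\varepsilon} \|_{L^2(\Omega_{\varepsilon},d\mu_{\varepsilon})}^2 ) < \infty$, that is, the compactness hypothesis (\ref{e:com}) holds. Lemma \ref{l:psi-c} then produces a further subsequence and a limit $\psi^{\infty} \in H^1(G)$, and by uniqueness of the strong $L^2$-limit we get $u = \psi^{\infty} \in H^1(G)$. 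For the kinetic term I would pass to the stretched coordinates: from the weak convergence $\partial_{y_1} w_j^{\varepsilon} \rightharpoonup (\psi_j^{\infty})'(y_1)$ in $L^2(Q_j)$, weak lower semicontinuity of the norm, and the identity (\ref{e:we-2}) (whose prefactor tends to $\omega$), one obtains $\int_0^{l_j} |(\psi_j^{\infty})'|^2 \, ds \le \liminf_{\varepsilon} \int_{D_{j,\varepsilon}} |\nabla u^{\varepsilon}|^2 \, d\mu_{\varepsilon}$; summing over $j$ and using $\sum_j \int_{D_{j,\varepsilon}} \le \int_{\Omega_{\varepsilon}}$ gives $\varphi^K(u) \le \liminf_{\varepsilon} \varphi_{\varepsilon}^K(u^{\varepsilon})$. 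For the potential term, the change of variables $z = x/\varepsilon$ rewrites $\varphi_{\varepsilon}^V(u^{\varepsilon})$ as $\tfrac{1}{\omega}\int_{\varepsilon_0^{-1}J} V(z) |v^{\varepsilon}(z)|^2 \, dz$, and since $v^{\varepsilon} \to \psi^{\infty}(O)$ in $L^2$ along this subsequence by Lemma \ref{l:ve}(2) and (\ref{e:psi-c11}), with $V$ bounded, this converges to $C_V |\psi^{\infty}(O)|^2 = \varphi^V(u)$. As $\varphi_{\varepsilon}^V$ converges, so does the difference $\varphi_{\varepsilon}^K$, and adding the two estimates yields $\varphi(u) \le \liminf_{\varepsilon} \varphi_{\varepsilon}(u^{\varepsilon})$.

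For the recovery condition $(\mathrm{F}2)$, when $\psi \notin H^1(G)$ the compactness argument above shows that every strongly convergent sequence has energy tending to $+\infty = \varphi(\psi)$, so it suffices to treat $\psi \in H^1(G)$. I would first build the recovery sequence for the dense subclass of $\psi \in H^1(G)$ that are constant, equal to $\psi(O)$, on a neighbourhood $[0,\delta]$ of $O$ on every edge. For such $\psi$ set $u^{\varepsilon} := \psi(O)$ on $J_{\varepsilon}$ and $u^{\varepsilon}(x) := \psi_j(y_1)$ on $D_{j,\varepsilon}$ with $x = R_j y$; for small $\varepsilon$ the interface values match, so $u^{\varepsilon} \in H^1(\Omega_{\varepsilon})$. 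Then $u^{\varepsilon}$ is constant on $J_{\varepsilon}$, so the junction contributes nothing to $\varphi_{\varepsilon}^K$; a direct computation on each tube gives $\int_{D_{j,\varepsilon}} |\nabla u^{\varepsilon}|^2 \, d\mu_{\varepsilon} = \int_{\varepsilon l}^{l_j} |\psi_j'|^2 \to \int_0^{l_j} |\psi_j'|^2$; and since $\mathrm{supp}\, V_{\varepsilon} \subset J_{\varepsilon}$ where $u^{\varepsilon} \equiv \psi(O)$, one finds $\varphi_{\varepsilon}^V(u^{\varepsilon}) = |\psi(O)|^2 \int_{J_{\varepsilon}} V_{\varepsilon} \, d\mu_{\varepsilon} = C_V |\psi(O)|^2$ exactly. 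Hence $\varphi_{\varepsilon}(u^{\varepsilon}) \to \varphi(\psi)$, and $u^{\varepsilon} \to \psi$ strongly because $u^{\varepsilon} = \psi \circ f_{\varepsilon}$ on the tubes while the junction has vanishing measure.

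Finally I would remove the extra assumption by approximation: given $\psi \in H^1(G)$, flatten it near $O$ to obtain $\psi^{(\delta)}$ constant on $[0,\delta]$ with $\psi^{(\delta)}(O) = \psi(O)$ and $\psi^{(\delta)} \to \psi$ in $H^1(G)$, where a Cauchy--Schwarz estimate controls the transition layer so that $\varphi(\psi^{(\delta)}) \to \varphi(\psi)$. Combining this with the previous step and the lower semicontinuity of the $\Gamma$-upper limit (or, equivalently, a diagonal extraction compatible with the strong convergence of Definition \ref{d:st-c}) produces a recovery sequence for $\psi$ itself. The main obstacle I expect is precisely this recovery construction at the junction: one must simultaneously keep $u^{\varepsilon}$ admissible (continuous across $\Gamma_{j,\varepsilon}'$), make the junction Dirichlet energy negligible, and reproduce the delta-potential weight $C_V |\psi(O)|^2$ in the limit. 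Forcing $\psi$ to be locally constant near $O$ is what renders these three requirements compatible, and the only remaining analytic point is the $H^1(G)$-density of such functions together with the diagonal passage to the general $\psi$.
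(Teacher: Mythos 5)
Your proposal is correct, and it reaches the same conclusion by a partly different route, so it is worth comparing with the paper's argument. For the liminf inequality $(\mathrm{F}1)$ both arguments begin identically: pass to a subsequence attaining the liminf, observe that nonnegativity of $\varphi_{\varepsilon}^V$ yields the compactness bound (\ref{e:com}), invoke Lemma \ref{l:psi-c} to identify the limit as a function in $H^1(G)$, and compute the potential term exactly via the rescaled junction functions $v^{\varepsilon}$. For the kinetic lower bound, however, you work directly on the full stretched tubes $Q_j$, using the weak convergence $\partial_{y_1} w_j^{\varepsilon} \rightharpoonup (\psi_j^{\infty})'$ from Lemma \ref{l:we} together with weak lower semicontinuity of the $L^2$-norm and the prefactor $(l_j-\varepsilon l)/l_j \to 1$ in (\ref{e:we-2}); the paper instead truncates at a fixed $\delta>0$, applies the auxiliary Lemma \ref{l:tube-lsc} on the $\varepsilon$-independent pieces $D_{j,\varepsilon}^{\delta}$, and then sends $\delta \to +0$. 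Your version is slightly more economical since it avoids the $\delta$-limit and reuses machinery already set up in Section 4, while the paper's version isolates the tube estimate as a self-contained lemma. The genuine divergence is in $(\mathrm{F}2)$: the paper builds an exact recovery sequence for \emph{every} $\psi\in H^1(G)$ by reparametrizing each edge with the affine map $\gamma_{\varepsilon}$, which automatically forces the interface value $\psi_j(\gamma_\varepsilon(\varepsilon l))=\psi_j(0)=\psi(O)$ to match the constant on $J_{\varepsilon}$, whereas you first treat the dense class of $\psi$ locally constant near $O$ (where the naive extension is admissible and reproduces $C_V|\psi(O)|^2$ exactly) and then pass to general $\psi$ by a density-plus-diagonalization argument. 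Your route buys a very transparent junction construction at the cost of invoking the lower semicontinuity of the $\Gamma$-limsup (or a diagonal extraction compatible with Definition \ref{d:st-c}) in the varying-Hilbert-space setting, a standard but not entirely free step from the Kuwae--Shioya framework; the paper's reparametrization trick makes the recovery sequence explicit in one stroke and keeps the proof self-contained. Both are sound.
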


\begin{proof}
	We have to prove that the two conditions $(\mathrm{F}1)$ and $(\mathrm{F}2)$ in Definition \ref{d:g-c} are satisfied.

	First, we check $(\mathrm{F}1)$.
	To do so, let $\{ u^{\varepsilon} \}_{\varepsilon \in I}$ with 
	$u^{\varepsilon} \in L^2(\Omega_{\varepsilon}, d\mu_{\varepsilon})$ be any functional sequence which converges strongly
	to some function $\psi \in L^2(G)$ as $\varepsilon \to +0$ in the sense of Definition \ref{d:st-c}.
	Now we have to show that
	\begin{equation}\label{e:g-11}
		\varphi(\psi) \leq \liminf_{\varepsilon \to +0} \, \varphi_{\varepsilon}(u^{\varepsilon}).
	\end{equation}
	If $\displaystyle \liminf_{\varepsilon \to +0} \, \varphi_{\varepsilon}(u^{\varepsilon}) = +\infty$, then (\ref{e:g-11}) is trivial.
	So we can assume that 
	\begin{equation*}\label{e:g-12}
		\liminf_{\varepsilon \to +0} \, \varphi_{\varepsilon}(u^{\varepsilon}) < +\infty.
	\end{equation*}
	Hence we are able to take some suitable subsequence $\{ u^{\varepsilon_m} \}_{m=1}^{\infty}$ for $\varepsilon_m \searrow 0$ which
	there exists a constant $C > 0$ such that
	\[ \varphi_{\varepsilon_m}(u^{\varepsilon_m}) \leq C \quad (m \in \mathbb{N}), 
		\qquad \liminf_{\varepsilon \to +0} \, \varphi_{\varepsilon}(u^{\varepsilon})
		= \liminf_{m \to \infty} \, \varphi_{\varepsilon_m}(u^{\varepsilon_m}). \]
	In this case, $\{ u^{\varepsilon_m} \}_{m=1}^{\infty}$ satisfies the condition (\ref{e:com}), that is 
	\[ \sup_{m \in \mathbb{N}} \left\{ \varphi_{\varepsilon_m}^K(u^{\varepsilon_m}) 
		+ \| u^{\varepsilon_m} \|_{L^2(\Omega_{\varepsilon_m},d\mu_{\varepsilon_m})}^2 \right\} < +\infty. \]
	By Lemma \ref{l:psi-c}, some subsequence of $\{ u^{\varepsilon_m} \}_{m=1}^{\infty}$ converges strongly to a function
	$\psi^{\infty} \in H^1(G)$ in the sense of Definition \ref{d:st-c}.
	Therefore it follows that
	\begin{equation*}
		\psi = \psi^{\infty} \in H^1(G), \qquad
			\lim_{m \to \infty} \, 
				\| u^{\varepsilon_m} - \psi \circ f_{\varepsilon_m} \|_{L^2(\Omega_{\varepsilon_m},d\mu_{\varepsilon_m})} = 0.
	\end{equation*}	
		
	We fix a positive constant $\delta$.
	Then, for $0 < \varepsilon < \delta/l$ we can divide domains $\Omega_{\varepsilon}$ as follows:
	\[ \Omega_{\varepsilon} = J_{\varepsilon}^{\delta} \sqcup \bigsqcup_{j=1}^{N} D_{j,\varepsilon}^{\delta}, \qquad
		D_{j,\varepsilon}^{\delta} := \{ x = R_j y \in \mathbb{R}^n \mid \delta < y_1 < l_j, \, |y^{\prime}| < \varepsilon \}. \]
	Here we prepare a useful lemma about functionals on tubular domains.
	This lemma is proved by only direct calculations. 
	So we omit it here and prove in the end of this proposition's proof.

	\begin{lemma}\label{l:tube-lsc}
		Let $Q_{\varepsilon} := (0,L) \times B_{\varepsilon}$ be a tubular domain and $E_{\varepsilon}$ be a following bilinear form 
		on $L^2(Q_{\varepsilon},d\mu_{\varepsilon})$:
		\[ E_{\varepsilon}(g^{\varepsilon}) := \left\{ \begin{array}{cc}
			\displaystyle 
				\int_{Q_{\varepsilon}} |\nabla g^{\varepsilon}|^2 \, d\mu_{\varepsilon} & \quad 
					\mathrm{if} \ \ g^{\varepsilon} \in H^1(Q_{\varepsilon},d\mu_{\varepsilon}), \\[0.2cm]
				+\infty & \quad \mathrm{otherwise}
		\end{array} \right. \]
		for $0 < \varepsilon < 1$.
		Suppose that a functional sequence $\{ g^{\varepsilon} \}_{0 < \varepsilon < 1}$ 
		with $g^{\varepsilon} \in L^2(Q_{\varepsilon}, d\mu_{\varepsilon})$ converges strongly
			to some function $g \in L^2(0,L)$ as $\varepsilon \to +0$ in the sense of Definition \ref{d:st-c}
		and $\{ E_{\varepsilon}(g^{\varepsilon}) \}_{0 < \varepsilon < 1}$ is bounded.
		Then $g \in H^1(0,L)$ and the following inequality
			\[ \int_{0}^{L} |g^{\prime}(s)|^2 \, ds \leq \liminf_{\varepsilon \to +0} \, E_{\varepsilon}(g^{\varepsilon}) \]
			holds.
	\end{lemma}

\medskip

	Because $D_{j,\varepsilon}^{\delta}$ are tubular domains for fixed $\delta$, we can apply this results.
	So we obtain
	\[ \int_{\delta}^{l_j} |\psi_j^{\prime}(s)|^2 \, ds \leq
			\liminf_{m \to \infty} \, \int_{D_{j,\varepsilon_m}^{\delta}}  |\nabla u^{\varepsilon_m}|^2 \, d\mu_{\varepsilon_m}. \]
	So the following inequality
	\[ \begin{split}
		\sum_{j=1}^N \int_{\delta}^{l_j} |\psi_j^{\prime}(s)|^2 \, ds
		&\leq \liminf_{m \to \infty} \, \sum_{j=1}^N \int_{D_{j,\varepsilon_m}^{\delta}} 
			|\nabla u^{\varepsilon_m}|^2 \, d\mu_{\varepsilon_m} \\[0.1cm]
			&\leq \liminf_{m \to \infty} \, \varphi_{\varepsilon_m}^K(u^{\varepsilon_m})
		\end{split} \]
	holds for $\delta > 0$. By tending $\delta \to +0$, we obtain that
	\begin{equation}\label{e:g-13}
		\varphi^K(\psi) = \sum_{j=1}^N \int_{0}^{l_j} |\psi_j^{\prime}(s)|^2 \, ds
			\leq \liminf_{m \to \infty} \, \varphi_{\varepsilon_m}^K(u^{\varepsilon_m}) 
				= \liminf_{\varepsilon \to +0} \, \varphi_{\varepsilon}^K(u^{\varepsilon}).
	\end{equation}

	On the other hand, it follows that by the transformation of variables $x = \varepsilon_m z$
	\[ \begin{split}
		\varphi_{\varepsilon_m}^V (u^{\varepsilon_m})
		&= \int_{J_{\varepsilon_m}}  \frac{1}{\varepsilon_m} \, V\left( \frac{x}{\varepsilon_m} \right) |u^{\varepsilon_m}(x)|^2 \, 
			\frac{dx}{\omega \varepsilon_m^{n-1}} \\[0.1cm]
		&= \int_{\varepsilon_0^{-1}J}  V(z) |v^{\varepsilon_m}(z)|^2 \, \frac{dz}{\omega}.
		\end{split} \]
	From the same arguments in the proof of Lemma \ref{l:psi-c}, we obtain that
	\begin{equation}\label{e:g-14}
			\lim_{m \to \infty} \, \varphi_{\varepsilon_m}^V (u^{\varepsilon_m})
			= \int_{\varepsilon_0^{-1}J}  V(z) |\psi(O)|^2 \, \frac{dz}{\omega}
			= \varphi^V(\psi).
	\end{equation}
	So (\ref{e:g-13}) and (\ref{e:g-14}) imply (\ref{e:g-11}), that is the condition ($\mathrm{F}1$).

\bigskip

	Next, we check ($\mathrm{F}2$).
	For any $\psi \in D(\varphi) = H^1(G)$,
	we have to make a functional sequence $\{ u^{\varepsilon} \}_{\varepsilon \in I}$ 
	with $u^{\varepsilon} \in L^2(\Omega_{\varepsilon}, d\mu_{\varepsilon})$ such that
	\begin{equation}\label{e:g-21}
		\lim_{\varepsilon \to +0} \, 
			\| u^{\varepsilon} - \psi \circ f_{\varepsilon} \|_{L^2(\Omega_{\varepsilon}, d\mu_{\varepsilon})} = 0, \qquad
		\lim_{\varepsilon \to +0} \, \varphi_{\varepsilon}(u^{\varepsilon}) = \varphi(\psi).
	\end{equation}
	Since $\psi_j \in H^1(0,l_j)$, we can define the functions $\psi_j^{\varepsilon} \in H^1(\varepsilon l,l_j)$ as
	\begin{equation*}
		\psi_j^{\varepsilon}(s) := \psi_j \bigl( \gamma_{\varepsilon}(s) \bigr),
		\qquad \gamma_{\varepsilon}(s) := \frac{l_j}{l_j - \varepsilon l}(s - \varepsilon l) \qquad (\varepsilon l < s < l_j)
	\end{equation*}
	for $j=1,\ldots,N$.
	Moreover we can define the function $u^{\varepsilon} \in C(\overline{\Omega_{\varepsilon}})$ as
	\begin{equation*}
		u^{\varepsilon}(x) := \left\{ \begin{array}{cc}
			\psi_j^{\varepsilon} ( \pi_1 R_j^{-1}x ) & \quad (x \in D_{j,\varepsilon}), \\[0.2cm]
			\psi(O) & \quad (x \in J_{\varepsilon}).
			\end{array} \right. 
	\end{equation*}
	Hereafter we show that $\{ u^{\varepsilon} \}_{\varepsilon \in I}$ satisfies (\ref{e:g-21}) below.

	To prove the first condition of (\ref{e:g-21}), we estimate the following norm
	\begin{equation*}
		\begin{split}
			\| u^{\varepsilon} - \psi \circ f_{\varepsilon} \|_{L^2(\Omega_{\varepsilon}, d\mu_{\varepsilon})}^2
			&= \int_{J_{\varepsilon}}  |u^{\varepsilon} - \psi \circ f_{\varepsilon}|^2 \, d\mu_{\varepsilon}
				+ \sum_{j=1}^N \int_{D_{j,\varepsilon}}  |u^{\varepsilon} - \psi \circ f_{\varepsilon}|^2 \, d\mu_{\varepsilon} \\[0.1cm]
			&= \frac{1}{\omega \varepsilon^{n-1}} \int_{J_{\varepsilon}}  |\psi(O) - (\psi \circ f_{\varepsilon})(x)|^2 \, dx
				+ \sum_{j=1}^N \int_{\varepsilon l}^{l_j} |\psi_j^{\varepsilon}(s) - \psi_j(s)|^2 \, ds \\[0.1cm]
			&\leq \| \psi(O) - \psi \|_{L^{\infty}(G)}^2 \frac{|J_{\varepsilon}|}{\omega \varepsilon^{n-1}}
				+ \sum_{j=1}^N \int_{\varepsilon l}^{l_j} |\psi_j(\gamma_{\varepsilon}(s)) - \psi_j(s)|^2 \, ds.
		\end{split}
	\end{equation*}
	Hence this limit condition
	\[ \lim_{\varepsilon \to +0} \, 
		\| u^{\varepsilon} - \psi \circ f_{\varepsilon} \|_{L^2(\Omega_{\varepsilon}, d\mu_{\varepsilon})} = 0 \]
	holds since (\ref{e:ordJD}) and $\gamma_{\varepsilon}(s) \to s$ as $\varepsilon \to +0$.

\medskip

	To prove the second condition of (\ref{e:g-21}), we use the transformation of variable $t = \gamma_{\varepsilon}(s)$. So we have
	\[ \begin{split}
		\int_{\Omega_{\varepsilon}}  |\nabla u^{\varepsilon}(x)|^2 \, d\mu_{\varepsilon}
		&= \sum_{j=1}^N \int_{\varepsilon l}^{l_j} \left| \frac{d \psi_j^{\varepsilon}}{ds}(s) \right|^2 ds \\[0.1cm]
		&= \sum_{j=1}^N 
			\int_{\varepsilon l}^{l_j} |\psi_j^{\prime}( \gamma_{\varepsilon}(s)) \gamma_{\varepsilon}^{\prime}(s)|^2 \, ds \\[0.1cm]
		&= \sum_{j=1}^N \frac{l_j}{l_j - \varepsilon l} \, \int_{0}^{l_j} |\psi_j^{\prime}(t)|^2 \, dt.
	\end{split} \]
	Therefore $u^{\varepsilon} \in H^1(\Omega_{\varepsilon}) = D(\varphi_{\varepsilon}^K)$ and
	\begin{equation}\label{e:g-25}
		\lim_{\varepsilon \to +0} \, \varphi_{\varepsilon}^K (u^{\varepsilon}) = \sum_{j=1}^N \int_{0}^{l_j} |\psi_j^{\prime}(t)|^2 \, dt
			= \varphi^K(\psi).
	\end{equation}
	Also we obtain that
	\begin{equation}\label{e:g-26}
		\begin{split}
			\varphi_{\varepsilon}^V (u^{\varepsilon})
			&= \int_{J_{\varepsilon}} \frac{1}{\varepsilon} \, 
				V\left( \frac{x}{\varepsilon} \right) |\psi(O)|^2 \, \frac{dx}{\omega \varepsilon^{n-1}} \\[0.1cm]
			&= \frac{|\psi(O)|^2}{\omega} \, \int_{\varepsilon_0^{-1}J}  V(z) \, dz \\[0.1cm]
			&= \varphi^V(\psi)
		\end{split}
	\end{equation}
	for $\varepsilon \in I$.
	Since (\ref{e:g-25}) and (\ref{e:g-26}), the sequence $\{ u^{\varepsilon} \}_{\varepsilon \in I}$ satisfies (\ref{e:g-21}).

\medskip

	By above argument, the proof is completed.
	Lastly, we prove Lemma \ref{l:tube-lsc}. 
	
	\noindent
	(Proof of Lemma \ref{l:tube-lsc})
	
	We define a functional sequence $\{ h^{\varepsilon} \} \subset L^2(Q_1, d\mu_1)$ as 
	$h^{\varepsilon}(y) := g^{\varepsilon}(y_1, \varepsilon y^{\prime})$.
	Then by the direct calculations we have
	\[ \int_{Q_1} |h^{\varepsilon} - g \circ \pi_1 |^2 \, d\mu_1 = 
		\int_{Q_{\varepsilon}} |g^{\varepsilon} - g \circ \pi_1 |^2 \, d\mu_{\varepsilon}
		\ \longrightarrow \ 0 \qquad (\varepsilon \to +0) \]
	and
	\[ E_1(h^{\varepsilon}) = \int_{Q_1} |\nabla h^{\varepsilon}|^2 \, d\mu_1
		= \int_{Q_{\varepsilon}} \left( \left| \frac{\partial g^{\varepsilon}}{\partial x_1} \right|^2 
			+ \varepsilon^2 | \nabla_{x^{\prime}} g^{\varepsilon} |^2 \right) d\mu_{\varepsilon}
		\leq \int_{Q_{\varepsilon}} |\nabla g^{\varepsilon}|^2 \, d\mu_{\varepsilon} = E_{\varepsilon}(g^{\varepsilon}). \]
	Hence $\{ h^{\varepsilon} \}$ is bounded in $H^1(Q_1, d\mu_1)$.
	Therefore we obtain that $g \in H^1(Q_1, d\mu_1)$ and
	\[ \int_{Q_1} |g^{\prime}|^2 \, d\mu_1 \leq \liminf_{\varepsilon \to +0} \, \int_{Q_1} |\nabla h^{\varepsilon}|^2 \, d\mu_1. \]
	Since above inequalities it follows that
	\[ \int_{0}^{L} |g^{\prime}(s)|^2 \, ds 
		= \int_{Q_1} |g^{\prime}|^2 \, d\mu_1 \leq \liminf_{\varepsilon \to +0} \, E_{1}(h^{\varepsilon})
			\leq \liminf_{\varepsilon \to +0} \, E_{\varepsilon}(g^{\varepsilon}). \]
	This is the conclusion of lemma which we omit to prove before.
\end{proof}

\bigskip

\begin{theorem}\label{p:M-c}
	The functional sequence $\{ \varphi_{\varepsilon} \}_{\varepsilon \in I}$ defined by (\ref{d:phie}) Mosco converges to
	the function $\varphi$ defined by (\ref{d:phi}) as $\varepsilon \to +0$ in the sense of Definition \ref{d:M-c}.
\end{theorem}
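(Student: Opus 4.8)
The plan is to reduce Mosco convergence to the $\Gamma$-convergence already proved in Theorem \ref{p:G-c} by invoking the abstract equivalence in Lemma \ref{l:g-M}. Recall that Mosco convergence (Definition \ref{d:M-c}) consists of the recovery condition $(\mathrm{F}2)$ together with the \emph{weak} lower bound $(\mathrm{F}1')$, whereas $\Gamma$-convergence uses the \emph{strong} lower bound $(\mathrm{F}1)$. Since $(\mathrm{F}2)$ is common to both notions and has been verified in the proof of Theorem \ref{p:G-c}, the only gap is to upgrade $(\mathrm{F}1)$ to $(\mathrm{F}1')$. Lemma \ref{l:g-M} tells us that this upgrade is automatic \emph{provided} the family $\{\varphi_{\varepsilon}\}_{\varepsilon \in I}$ is asymptotically compact. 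Hence the entire argument reduces to verifying asymptotic compactness.

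To check asymptotic compactness, I would take an arbitrary net (here effectively a sequence, since $I = (0,\varepsilon_0]$ is directed by $\varepsilon \to +0$) $\{ u^{\varepsilon} \}$ with $u^{\varepsilon} \in L^2(\Omega_{\varepsilon}, d\mu_{\varepsilon})$ satisfying $\limsup_{\varepsilon \to +0} ( \varphi_{\varepsilon}(u^{\varepsilon}) + \| u^{\varepsilon} \|_{L^2(\Omega_{\varepsilon},d\mu_{\varepsilon})}^2 ) < +\infty$. The key observation is that the potential term is nonnegative: since $V \geq 0$ we have $V_{\varepsilon} \geq 0$, whence $\varphi_{\varepsilon}^V(u^{\varepsilon}) \geq 0$ and therefore $\varphi_{\varepsilon}^K(u^{\varepsilon}) + \| u^{\varepsilon} \|^2 \leq \varphi_{\varepsilon}(u^{\varepsilon}) + \| u^{\varepsilon} \|^2$. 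Consequently the sequence satisfies the compactness hypothesis (\ref{e:com}) with a uniform bound $M$, so Lemma \ref{l:psi-c} produces a subsequence $\{ u^{\varepsilon_k} \}$ and a limit $\psi^{\infty} \in H^1(G)$ with $\int_{\Omega_{\varepsilon_k}} |u^{\varepsilon_k} - \psi^{\infty} \circ f_{\varepsilon_k}|^2 \, d\mu_{\varepsilon_k} \to 0$. Because $G$ is compact, $\psi^{\infty} \in H^1(G) \subset C(G) = C_0(\mathrm{supp}\, d\mu)$, so this is precisely $L^2$-strong convergence in the sense of Definition \ref{d:st-c} (taking the approximating net to be $\psi^{\infty}$ itself and $\Phi_{\varepsilon_k} \psi^{\infty} = \psi^{\infty} \circ f_{\varepsilon_k}$). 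Thus every such bounded sequence admits a strongly convergent subsequence, which is exactly asymptotic compactness.

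With asymptotic compactness established, the conclusion is immediate: Theorem \ref{p:G-c} gives $\Gamma$-convergence of $\{ \varphi_{\varepsilon} \}$ to $\varphi$, and Lemma \ref{l:g-M} then upgrades this to Mosco convergence. I expect the substantive content of the theorem to be carried entirely by the asymptotic compactness step, which in turn rests on Lemma \ref{l:psi-c} — in particular on the delicate verification in Section 4.3 that the edge-limits $\psi_j^{\infty}$ agree at the junction, so that the limit genuinely belongs to $H^1(G)$. The remaining points are bookkeeping: confirming that $\varphi_{\varepsilon}$ and $\varphi$ are closed quadratic forms so that Definition \ref{d:M-c} and Lemma \ref{l:g-M} apply, and matching the convergence produced by Lemma \ref{l:psi-c} with the strong convergence of Definition \ref{d:st-c}; both are routine given the preceding sections.
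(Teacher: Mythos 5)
Your proposal is correct and follows exactly the route the paper takes: the paper's proof of Theorem \ref{p:M-c} is precisely the one-line combination of Theorem \ref{p:G-c} ($\Gamma$-convergence), Lemma \ref{l:psi-c} (which supplies asymptotic compactness), and Lemma \ref{l:g-M}. Your additional remarks --- that $V \geq 0$ lets the bound on $\varphi_{\varepsilon} + \| \cdot \|^2$ imply condition (\ref{e:com}), and that the convergence produced by Lemma \ref{l:psi-c} is strong convergence in the sense of Definition \ref{d:st-c} --- are details the paper leaves implicit, and you have filled them in correctly.
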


\begin{proof}
	This statement follows from Proposition \ref{p:G-c} ($\Gamma$-convergence),	Lemma \ref{l:psi-c} (asymptotic compactness)
	and Lemma \ref{l:g-M}.
\end{proof}

\bigskip

Therefore we can apply Theorem \ref{t:K-S} established by Kuwae and Shioya \cite{Kuwae-Shioya}.

\section{Remarks about more general network case}

We consider about simple thin domains, which have a single junction point, on the previous section.
So we remark more general case.

It is not difficult to see that our previous arguments work
(by refining the transformation of variables $\alpha_{\varepsilon}$ in the tubular domains which both side are junction domains)
if a connected graph $G = (V,E)$ satisfies the following conditions:
\begin{enumerate}
\item The set of all edges $E$ is finite.
\item All edges have a finite length.
\end{enumerate}
Because under these assumptions, thin domains $\Omega_{\varepsilon}$ is bounded,
we can apply the compactly embedding theorem in each part.
\begin{figure}[htbp]
		\begin{center}
		\includegraphics[width=50mm]{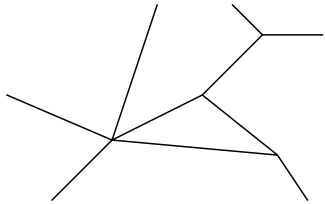}
		\caption{network shaped graph $G$}
		\end{center}
\end{figure}

\section*{Acknowledgment}

The author would like to express my gratitude to S. Albeverio and P. Exner
for insightful comments and suggestions,
and thanks to C. Cacciapuoti for illuminating discussions.


\end{document}